\documentclass[11pt,a4paper,reqno]{amsart}

\usepackage[english]{babel} 

\usepackage{fourier}
\usepackage[T1]{fontenc}

\usepackage{amssymb,amsrefs,amsmath,mathtools}
\usepackage{graphicx,nicefrac}
\usepackage[babel=true]{microtype} 
\usepackage[shortlabels]{enumitem}
\usepackage{overpic}
\usepackage{todonotes}
\usepackage{mathrsfs}
\usepackage{hyperref}
\usepackage{ourbib}
\usepackage{xcolor}
\usepackage{csquotes}
\setlength{\textwidth}{139mm} 
\setlength{\textheight}{247mm} 
\calclayout

\synctex=1
\setlist{leftmargin=4mm}
\newtagform{simple}{}{}{}


\hypersetup{
    colorlinks,
    linkcolor={red!50!black},
    citecolor={blue!50!black},
    urlcolor={blue!80!black}
}


\newcommand{\R}{\mathbb{R}}

\newcommand{\Z}{\mathbb{Z}}

\renewcommand{\S}{\mathbb{S}}
\newcommand{\eps}{\varepsilon}

\newcommand{\dV}{\,\mathsf{dV}}
\renewcommand{\phi}{\varphi}
\newcommand{\scal}{\mathrm{scal}}
\DeclareMathOperator{\Ric}{Ric}

\newcommand{\vol}{\mathsf{vol}}
\newcommand{\len}{\mathsf{len}}

\renewcommand{\O}{\mathsf{O}}
\newcommand{\dd}{\mathrm{d}}

\DeclareMathOperator{\id}{\mathrm{id}}
\DeclareMathOperator{\dist}{\mathrm{dist}}

\NewDocumentCommand{\charFun}{}{1}
\NewDocumentCommand{\Disk}{}{\mathrm{D}}
\NewDocumentCommand{\Ball}{}{\mathrm{B}}

\NewDocumentCommand{\Sphere}{}{\S}


\usepackage{thmtools}
\declaretheorem[name=Theorem]{theorem}

\declaretheorem[name=Corollary, numbered=no]{corollary*}

\declaretheorem[name=Theorem, numbered=no]{theorem*}
\declaretheorem[name=Question, numbered=no]{question}

\declaretheorem[name=Lemma, numberlike=theorem]{lemma}
\declaretheorem[name=Proposition, numberlike=lemma]{proposition}




\usepackage[noabbrev,capitalise]{cleveref} 
\crefdefaultlabelformat{#2\textup{#1}#3}


\allowdisplaybreaks


\begin{document}

\title[The degree condition in Llarull's theorem]{The degree condition in Llarull's theorem on scalar curvature rigidity} 
\author{Christian B\"ar}
\author{Rudolf Zeidler}

\address{Universit\"at Potsdam, Institut f\"ur Mathematik, 14476 Potsdam, Germany}
\email{\href{mailto:christian.baer@uni-potsdam.de}{christian.baer@uni-potsdam.de}}
\email{\href{mailto:rudolf.zeidler@uni-potsdam.de}{rudolf.zeidler@uni-potsdam.de}}
\urladdr{\href{https://www.math.uni-potsdam.de/baer/}{www.math.uni-potsdam.de/baer}}
\urladdr{\href{https://www.rzeidler.eu}{www.rzeidler.eu}}

\begin{abstract} 
Llarull's scalar curvature rigidity theorem states that a $1$-Lipschitz map \(f\colon M\to \S^n\) from a closed connected Riemannian spin manifold \(M\) with scalar curvature \(\scal\ge n(n-1)\) to the standard sphere \(\S^n\) is an isometry if the degree of \(f\) is nonzero.
We investigate if one can replace the condition \(\deg(f)\neq0\) by the weaker condition that \(f\) is surjective.
The answer turns out to be \enquote{no} for \(n\ge3\) but \enquote{yes} for \(n=2\).
If we replace the scalar curvature by Ricci curvature, the answer is \enquote{yes} in all dimensions.
\end{abstract}

\keywords{Scalar curvature, Ricci curvature, rigidity, Llarull's theorem, Lipschitz-volume rigidity}

\subjclass[2020]{53C20, 53C24}

\date{\today}

\maketitle

\section{Introduction} 

In 1998 Llarull published the following rigidity theorem for scalar curvature \cite{Ll}*{Theorem~B}.

\begin{theorem*}[Llarull]\label{thm.Llarull}
Let $n\ge2$ and let \((M,g)\) be an $n$-dimensional closed connected Riemannian spin manifold with $\scal_g\ge n(n-1)$ and let $f\colon M\to \S^n$ be a smooth $1$-Lipschitz map with $\deg(f)\ne0$. 

Then $f$ is an isometry.
\end{theorem*}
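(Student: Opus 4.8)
The plan is to run the spinorial argument of Lichnerowicz, Gromov and Lawson in the sharp form introduced by Llarull. I spell out the case where $n$ is even; the case of odd $n$ is handled by an analogous argument after stabilising with a circle factor (equivalently, via a $\mathrm{Cl}_1$-linear index), which I do not detail here.

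\emph{Twisting and index.} Equip $\S^n$ with its round metric and a spin structure, let $S^+,S^-\to\S^n$ be the half-spinor bundles with their Levi-Civita connections, and pull them back along $f$. Since $H^k(\S^n;\mathbb{Q})=0$ for $0<k<n$, the class $f^*\bigl(\mathrm{ch}(S^+)-\mathrm{ch}(S^-)\bigr)$ is concentrated in top degree $n$, so the Atiyah--Singer index theorem applied to the twisted spin Dirac operators $D^+_{f^*S^\pm}\colon\Gamma(S_M^+\otimes f^*S^\pm)\to\Gamma(S_M^-\otimes f^*S^\pm)$ gives
\[
  \mathrm{ind}\bigl(D^+_{f^*S^+}\bigr)-\mathrm{ind}\bigl(D^+_{f^*S^-}\bigr)=\deg(f)\int_{\S^n}\bigl(\mathrm{ch}(S^+)-\mathrm{ch}(S^-)\bigr),
\]
and the integral over $\S^n$ equals, up to sign, $\chi(\S^n)=2$. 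Since $\deg(f)\neq 0$, the left-hand side is nonzero, so one of the two twisted operators has nonzero index; fixing the corresponding half-spinor bundle $S_0$ and setting $E:=f^*S_0$, we obtain a nonzero twisted harmonic spinor $\phi\in\Gamma(S_M\otimes E)$ with $D_E\phi=0$.

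\emph{The curvature estimate (the crux).} The Schr\"odinger--Lichnerowicz formula for the twisted operator is $D_E^2=\nabla^*\nabla+\tfrac14\scal_g+\mathcal{R}^E$ with $\mathcal{R}^E=\sum_{i<j}(e_i\cdot e_j)\otimes R^E(e_i,e_j)$. Fix $p\in M$ and choose orthonormal frames $(e_i)$ of $T_pM$ and $(\epsilon_i)$ of $T_{f(p)}\S^n$ diagonalising $df_p$, so $df_p(e_i)=\lambda_i\epsilon_i$ with $\lambda_i\ge 0$; the $1$-Lipschitz hypothesis says precisely $\lambda_i\le 1$. Because $\S^n$ has constant curvature $1$, its spinor curvature is a pure Clifford bivector, $R^{S_0}(\epsilon_i,\epsilon_j)=-\tfrac12\,\epsilon_i\epsilon_j$, so $R^E(e_i,e_j)=-\tfrac12\lambda_i\lambda_j\,\epsilon_i\epsilon_j$ and
\[
  \mathcal{R}^E=-\tfrac12\sum_{i<j}\lambda_i\lambda_j\,A_{ij},\qquad A_{ij}:=(e_ie_j)\otimes(\epsilon_i\epsilon_j).
\]
A direct computation shows that the $A_{ij}$ are pairwise commuting self-adjoint operators with $A_{ij}^2=\mathrm{id}$, hence have eigenvalues in $\{\pm1\}$; thus $\langle A_{ij}\sigma,\sigma\rangle\le|\sigma|^2$, and since $\lambda_i\lambda_j\ge 0$,
\[
  \bigl\langle(\tfrac14\scal_g(p)+\mathcal{R}^E)\sigma,\sigma\bigr\rangle\ \ge\ \tfrac14\Bigl(n(n-1)-\sum_{i\ne j}\lambda_i\lambda_j\Bigr)|\sigma|^2\ \ge\ 0
\]
for every spinor $\sigma$, using $\scal_g\ge n(n-1)$ and that $\sum_{i\ne j}\lambda_i\lambda_j$ is a sum of $n(n-1)$ summands each at most $1$. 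Equality throughout forces $\scal_g(p)=n(n-1)$, all $\lambda_i(p)=1$, and $\sigma$ to lie in the common $(+1)$-eigenspace of the $A_{ij}$.

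\emph{Conclusion and main difficulty.} Pairing the Weitzenböck identity with $\phi$ and using $D_E\phi=0$ yields $0=\|\nabla\phi\|_{L^2}^2+\int_M\bigl\langle(\tfrac14\scal_g+\mathcal{R}^E)\phi,\phi\bigr\rangle$; both summands being nonnegative, they vanish separately. Hence $\phi$ is parallel, so it has constant positive norm and $\phi_p\neq0$ for all $p$, whence the nonnegative integrand vanishes identically. By the equality analysis, $df_p$ is orthogonal at every point, i.e.\ $f^*g_{\S^n}=g$, so $f$ is a local isometry (and $\scal_g\equiv n(n-1)$); as $M$ is closed, hence complete, and $\deg(f)\neq0$ makes $f$ surjective, $f$ is a Riemannian covering, and since $\S^n$ is simply connected for $n\ge2$ it is a diffeomorphism, hence an isometry. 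The crux of the argument is the sharp lower bound for $\tfrac14\scal_g+\mathcal{R}^E$ together with the identification of its equality case — both hinge on the constant curvature of the round metric (making the spinor curvature the pure bivector $-\tfrac12\,\epsilon_i\epsilon_j$) and the commutation of the $A_{ij}$. The other genuinely nontrivial point is the odd-dimensional case, where $\mathrm{ind}(D^+_E)$ vanishes for parity reasons and one argues instead over $M\times\S^1$ with a modified twisting bundle (or via a $\mathrm{Cl}_1$-linear index), producing such a bundle with nonzero index while keeping a usable curvature bound.
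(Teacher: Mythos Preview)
The paper does not prove this theorem at all: Llarull's theorem is quoted from \cite{Ll} as background, and the body of the paper is devoted to the separate questions of whether the hypothesis $\deg(f)\neq 0$ can be weakened to surjectivity (Theorems~\ref{Ricci_volume_llarull} and~\ref{thmB}). So there is no ``paper's own proof'' to compare against.

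That said, your sketch is a faithful and essentially correct outline of Llarull's original spinorial argument. The index computation, the Schr\"odinger--Lichnerowicz formula, the algebraic fact that the operators $A_{ij}=(e_ie_j)\otimes(\epsilon_i\epsilon_j)$ are commuting self-adjoint involutions (the anticommutation of overlapping bivectors in each tensor factor cancels), the resulting pointwise lower bound $\tfrac14\scal_g+\mathcal{R}^E\ge 0$, and the equality analysis forcing $\lambda_i\equiv 1$ are all correct. One small wording issue: you do not actually need that $\phi$ lies in the common $(+1)$-eigenspace of all $A_{ij}$ to reach the conclusion---the equality $\sum_{i\neq j}\lambda_i\lambda_j=n(n-1)$ with each $\lambda_i\in[0,1]$ already forces every $\lambda_i=1$, which is all you use. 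Also, your handling of the odd-dimensional case is only gestured at; in a complete write-up you would need to carry out the $M\times\S^1$ (or $\mathrm{Cl}_1$-linear) argument explicitly, since the curvature and index bookkeeping there is not entirely routine.
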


Applying this to the special case $M=S^n$ and $f=\id$, we obtain that the only Riemannian metric $g$ on the sphere $S^n$ with $\scal_g\ge n(n-1)=\scal_{g_{\mathsf{std}}}$ and $g\ge g_{\mathrm{std}}$ is the standard round metric $g_{\mathrm{std}}$ itself \cite{Ll}*{Theorem~A}.
The assumptions in Llarull's theorem can be weakened in various ways:
\begin{itemize}[label=$\triangleright$]
\item 
smoothness of $f$ can be dropped \cites{B,CHS,CHSS,LT};
\item
if $n\ge3$, then $1$-Lipschitzness of $f$ can be replaced by the weaker condition that the map induced by $df$ on $2$-vectors in $\bigwedge^2TM$ is nonexpanding \cite{Ll}*{Theorem~C};
\item
the metric $g$ may have lower regularity than smoothness \cites{CHS,CHSS,LT};
\item
the target manifold $\S^n$ can be replaced by certain other manifolds \cite{GS};
\item
the scalar curvature condition and the contraction property of $f$ can be combined into a single condition \cite{Li};
\item 
the manifold $M$ may have non-empty boundary if one imposes conditions on its mean curvature \cites{BBHW,CZ,HKKZ,HLS,Lo};
\item
the manifold $M$ may be noncompact with an incomplete metric and the target manifold $\S^n$ with two antipodal punctures \cites{BBHW,HKKZ,HLS}.
\end{itemize}

In the present note we investigate to what extent the condition $\deg(f)\ne0$ can be relaxed.
One certainly cannot just drop it, as a constant map $f$ would immediately lead to a counterexample.
One needs a condition that ensures that the map $f$ \enquote{wraps around the sphere} in a certain sense.
Therefore, we ask the following question.

\begin{question}
Does Llarull's theorem still hold if we replace the condition $\deg(f)\ne0$ by the weaker condition that $f$ is surjective?
\end{question}

We will see that the answer is \enquote{yes} in the case of dimension $n=2$ and that it is \enquote{no} in dimensions $n\ge3$.

Let $\S^n=(S^n,g_{\mathrm{std}})$ denote the $n$-dimensional sphere with the standard metric of constant sectional curvature $1$ and by $\S_\delta^n=(S^n,\delta\cdot g_{\mathrm{std}})$ the $n$-sphere with radius $\delta>0$ and constant curvature $\frac{1}{\delta^2}$.

Let $V$ and $W$ be two $n$-dimensional oriented Euclidean vector spaces.
For any linear map $A\colon V\to W$ we denote by $\bigwedge^n A\colon \bigwedge^n V\to \bigwedge^n W$ the induced linear map on the maximal exterior powers.
Since $\bigwedge^n V$ and $\bigwedge^n W$ are one-dimensional, oriented and carry an induced scalar product, there is a canonical identification $\bigwedge^n V\cong \R \cong \bigwedge^nW$.
With respect to this identification, we can view $\bigwedge^n A$ as acting by multiplication with a real number which we denote by $\det(A)$.
If $0\le \mu_1 \le \dots \le \mu_n$ are the singular values of $A$, then $\lvert \det(A)\rvert =\mu_1\cdots \mu_n$.
Note that the quantity \(\lvert \det(A)\rvert\) is independent of the chosen orientations on \(V\) and \(W\).

We start with the following result showing that the analogous question for Ricci curvature has a positive answer.

\begin{theorem}
\label{Ricci_volume_llarull}
Let $M$ be a connected closed Riemannian manifold of dimension $n\ge2$ with Ricci curvature \(\Ric \geq n-1\).
Let $f\colon M\to \S^n$ be a surjective Lipschitz map such that $\lvert \det(\dd_xf)\rvert \le1$ for almost all $x\in M$.

Then $M$ is isometric to $\S^n$ and $f$ is a bi-Lipschitz homeomorphism.
If, moreover, $f$ is $1$-Lipschitz, then it is an isometry.
\end{theorem}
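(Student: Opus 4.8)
First I would run the volume-comparison argument. By the area formula for Lipschitz maps,
\[
\int_{\S^n} \#f^{-1}(y)\dV(y) = \int_M \lvert\det(d_xf)\rvert\dV(x).
\]
Surjectivity of $f$ makes the left-hand side at least $\vol(\S^n)$, while the hypothesis $\lvert\det(d_xf)\rvert\le1$ a.e.\ makes the right-hand side at most $\vol(M)$. On the other hand $\Ric\ge n-1$ gives $\vol(M)\le\vol(\S^n)$ by Bishop's inequality. Hence every inequality here is an equality. In particular $\vol(M)=\vol(\S^n)$, so by the rigidity case of Bishop's inequality $M$ is isometric to $\S^n$; moreover $\lvert\det(d_xf)\rvert=1$ for a.e.\ $x\in M$, and $\#f^{-1}(y)=1$ for a.e.\ $y\in\S^n$.

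From here on I identify $M$ with $\S^n$ and upgrade these almost-everywhere statements to the claim that $f$ is a bi-Lipschitz homeomorphism. For a.e.\ $y$ the unique preimage $x$ of $y$ is a point of differentiability with $\det(d_xf)\ne0$, and the degree formula for Lipschitz maps $\deg f=\sum_{x\in f^{-1}(y)}\operatorname{sgn}\det(d_xf)$ (valid for a.e.\ $y$) shows that the sign of $\det(d_xf)$ is a.e.\ constant and equal to $\deg f\in\{\pm1\}$; after composing $f$ with an orientation-reversing isometry of the source $\S^n$ if necessary (which affects none of the hypotheses or conclusions), I may assume $\det(d_xf)>0$ a.e. Let $L$ be the Lipschitz constant of $f$. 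At a.e.\ $x$ the operator norm of $d_xf$ is $\le L$ while its Jacobian is $1$, so $\lVert d_xf\rVert^n\le L^n\det(d_xf)$ a.e.; thus $f$ is a nonconstant quasiregular map. By Reshetnyak's theorem $f$ is open, discrete and sense-preserving, hence a branched covering, and $\#f^{-1}(y)=\deg f$ for a.e.\ $y$; comparing with $\#f^{-1}(y)=1$ a.e.\ gives $\deg f=1$, so $f$ is a homeomorphism. Its inverse is again quasiconformal and satisfies $\lvert\det(d_yf^{-1})\rvert=1$ a.e., so $\lVert d_yf^{-1}\rVert$ is essentially bounded; hence $f^{-1}$ is Lipschitz, and $f$ is bi-Lipschitz.

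For the last assertion, suppose in addition that $f$ is $1$-Lipschitz. Then the singular values of $d_xf$ lie in $[0,1]$ and multiply to $1$ for a.e.\ $x$, so $d_xf$ is orthogonal for a.e.\ $x$; consequently $d_yf^{-1}=(d_{f^{-1}(y)}f)^{-1}$ is orthogonal for a.e.\ $y$, so $f^{-1}$ is $1$-Lipschitz as well. A bijection which is $1$-Lipschitz with $1$-Lipschitz inverse is distance-preserving, so by the Myers--Steenrod theorem $f$ is a Riemannian isometry.

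The main obstacle is exactly the second step: passing from the measure-theoretic output of the area formula (essential injectivity together with unit Jacobian a.e.) to genuine topological and metric regularity of $f$. I expect this to require the theory of quasiregular/quasiconformal mappings---openness and discreteness, the behaviour of the topological degree, and the $W^{1,\infty}$-regularity of inverses---rather than an elementary argument; alternatively one could try to invoke known Lipschitz-volume rigidity results. Lesser points that still need care are the precise form of the equality case of Bishop's inequality, the a.e.\ validity of the degree formula and of the chain rule $d_yf^{-1}=(d_{f^{-1}(y)}f)^{-1}$ for a bi-Lipschitz homeomorphism, and the standard fact that the Lipschitz constant of a map of the connected manifold $\S^n$ equals the essential supremum of its differential.
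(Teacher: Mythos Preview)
Your argument is correct and shares its opening move with the paper: the area formula combined with Bishop--Gromov forces $\vol(M)=\vol(\S^n)$, hence $M$ is isometric to $\S^n$ by Bishop rigidity, and simultaneously $\lvert\det d_xf\rvert=1$ and $\#f^{-1}(y)=1$ almost everywhere. The divergence is in how you upgrade these a.e.\ statements to a bi-Lipschitz homeomorphism. You invoke quasiregular mapping theory---Reshetnyak's openness/discreteness theorem to get a branched cover, then the local-index identity $\sum_{x\in f^{-1}(y)} i(x,f)=\deg f$ to conclude $f$ is an unbranched degree-one cover, and finally the $W^{1,\infty}$-regularity of a quasiconformal inverse with unit Jacobian. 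The paper instead argues entirely by hand: injectivity is obtained via the $\Z_2$-mapping degree together with volume bounds on small balls (showing fibers are finite, then ruling out a second preimage by a measure-overlap contradiction), and the Lipschitz bound on $f^{-1}$ comes from a tube-volume comparison---estimating volumes of thin neighbourhoods of a geodesic in the target and its preimage curve, then extracting the length ratio from the leading term as the tube radius tends to zero. Your route is shorter and conceptually clean but imports substantial analytic machinery and uses the identification $M\cong\S^n$ (hence orientability) before establishing that $f$ is a homeomorphism; the paper's route is elementary and self-contained, works with $\Z_2$-degree throughout so never needs orientability, and in fact delivers a stand-alone proposition---any $L$-Lipschitz map $f\colon M\to N$ between closed $n$-manifolds with $\vol(M)\le\vol_N(f(M))$ and $\lvert\det df\rvert\le1$ a.e.\ is a homeomorphism with $L^{n-1}$-Lipschitz inverse---with an explicit constant and without first knowing that $M$ and $N$ are isometric.
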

Note that by \cite{MS}*{Theorem~8}, a metric isometry between smooth Riemannian manifolds is a smooth Riemannian isometry.

Also observe that the \(1\)-Lipschitz case of \cref{Ricci_volume_llarull} is a consequence of Bishop--Gromov volume comparison and the well-known \enquote{Lipschitz-volume rigidity} principle (see e.g.~\cite{burago-ivanov}*{Lemma~9.1}).
Our proof of \cref{Ricci_volume_llarull} relies on a variant of the latter tailored to our purposes.
In particular, this answers the question from above positively in dimension $n=2$, including a treatment of not necessarily smooth Lipschitz maps which are merely nonexpanding on \(2\)-vectors:

\begin{corollary*}
\label{thmA}
Let $M$ be a connected closed Riemannian manifold of dimension $n=2$ with Gauss curvature $K\ge1$. 
Let $f\colon M\to \S^2$ be a surjective Lipschitz map such that the map induced by $\dd_x f$ on $2$-vectors in $\bigwedge^2TM$ is nonexpanding for almost all $x\in M$.

Then $M$ is isometric to $\S^2$ and $f$ is a bi-Lipschitz homeomorphism.
If, moreover, $f$ is $1$-Lipschitz, then it is an isometry.
\end{corollary*}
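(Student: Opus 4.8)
The plan is to obtain this corollary as the special case $n=2$ of \cref{Ricci_volume_llarull}, once the hypotheses are matched up. Two translations are needed. First, on a surface one has $\Ric = K\cdot g$, so the condition $K\ge1$ is literally the same as $\Ric\ge n-1$ when $n=2$; no orientability of $M$ is needed, and none is assumed in \cref{Ricci_volume_llarull}. Second, since $f$ is Lipschitz, $d_xf$ exists for almost every $x$ by Rademacher's theorem, and at such a point the induced map $\Lambda^2 d_xf\colon\Lambda^2 T_xM\to\Lambda^2 T_{f(x)}\S^2$ is a linear map between one-dimensional Euclidean spaces. Picking an orthonormal basis $e_1,e_2$ of $T_xM$, the unit vector $e_1\wedge e_2$ is sent to a vector of norm $\lvert\det(d_xf)\rvert$ by the very definition of $\det$ recalled in the introduction (a quantity independent of the chosen orientations). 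Hence $d_xf$ is nonexpanding on $2$-vectors if and only if $\lvert\det(d_xf)\rvert\le1$, and the almost-everywhere versions of these conditions coincide as well.

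With these identifications in place, \cref{Ricci_volume_llarull} applies directly and yields everything claimed: $M$ is isometric to $\S^2$, the map $f$ is a bi-Lipschitz homeomorphism, and it is an isometry whenever it is $1$-Lipschitz. Thus there is no genuine obstacle at the level of the corollary itself; all of the work is absorbed into \cref{Ricci_volume_llarull}, i.e.\ into the Lipschitz-volume rigidity argument together with Bishop--Gromov comparison. It is worth emphasizing why the statement is not vacuous: the pointwise bound $\lvert\det(d_xf)\rvert\le1$ allows $d_xf$ to be genuinely expanding in one direction as long as it contracts correspondingly in the orthogonal direction, so in dimension two this hypothesis is strictly weaker than $1$-Lipschitzness and matches the two-dimensional instance of Llarull's Theorem~C.
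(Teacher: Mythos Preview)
Your proof is correct and matches the paper's approach: the paper does not give a separate proof of the corollary but presents it as an immediate specialization of \cref{Ricci_volume_llarull} to $n=2$, and your two translations (that $K\ge1$ is precisely $\Ric\ge n-1$ on a surface, and that nonexpansion on $\Lambda^2$ is equivalent to $\lvert\det(d_xf)\rvert\le1$) are exactly what is needed to make that reduction explicit.
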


In contrast, for scalar curvature the situation in higher dimensions is completely different as our second theorem shows.

\begin{theorem}
\label{thmB}
Let $M$ and $N$ be connected closed smooth manifolds of dimension $n\ge3$.
Assume that $M$ admits Riemannian metrics with positive scalar curvature and let $N$ carry any Riemannian metric $g_N$.

Then for each $S_0>0$ and $\eps>0$ there exists a Riemannian metric $g_M$ on $M$ with $\scal_{g_M}\ge S_0$ and a smooth surjective $\eps$-Lipschitz map $f\colon (M,g_M)\to (N,g_N)$.
\end{theorem}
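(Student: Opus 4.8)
The plan for proving \cref{thmB} is to put on $M$ a metric $g_M$ with $\scal_{g_M}\ge S_0$ that contains a very long, thin round cylinder, and then to take $f$ constant on the bulk of $M$ and let it sweep over all of $N$ along that cylinder, traversed so slowly that $f$ becomes $\eps$-Lipschitz. The dimension restriction $n\ge3$ is exactly what makes this possible: a round $(n-1)$-sphere, and a Riemannian product cylinder over it, carries positive scalar curvature $(n-1)(n-2)/\rho^2$ only when $n-1\ge2$.

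First I would produce the long thin neck. Fix any metric $g_0$ on $M$ with $\scal_{g_0}>0$. Since $n\ge3$, the Gromov--Lawson surgery (neck-stretching) construction applied to the connected sum $M\cong M\# S^n$ yields, for a fixed radius $\rho=\rho(n,S_0)>0$ with $(n-1)(n-2)/\rho^2\ge S_0$ and for an arbitrarily large prescribed length $L$, a metric $g_M$ on $M$ with $\scal_{g_M}\ge S_0$ together with a decomposition $M=M_1\cup_{S^{n-1}}Z\cup_{S^{n-1}}M_2$ in which $M_1$ is $M$ with an open ball removed, $M_2$ is an $n$-disk, the neck $Z$ is isometric to the Riemannian product $\bigl(S^{n-1},\rho^2 g_{\mathrm{std}}\bigr)\times[0,L]$, and $g_M$ is a Riemannian product in a collar of each of the two gluing spheres. (One excises a small ball from a rescaled copy of $(M,g_0)$ and a small ball from a small round sphere, joins them by $Z$ keeping a positive lower scalar curvature bound, and rescales by a small constant to attain $\scal\ge S_0$; the rescaling only shrinks the picture, so $L$ may still be fixed afterwards.)

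Next I would set up auxiliary data. Choose a $\delta$-net $p_1,\dots,p_k$ of $(N,g_N)$ with $\delta>0$ small enough that each closed ball $\overline{\Ball}(p_j,\delta)$ lies inside the injectivity radius, that the standard Jacobi-field (Gauss-lemma) estimate $\lvert d(\exp_{p_j})_v(\xi)\rvert\le2\lvert\xi\rvert$ holds whenever $\lvert v\rvert\le\delta$ and $\xi$ is tangent at $v$ to the sphere of radius $\lvert v\rvert$ in $T_{p_j}N$, and that $2\delta/\rho\le\eps/2$. Fix linear isometries $\Psi_j\colon\R^n\to T_{p_j}N$. Choose pairwise disjoint closed intervals $I_1,\dots,I_k\subset(0,L)$, a smooth function $s\colon[0,L]\to[0,\delta]$ that vanishes outside $\bigcup_j I_j$, is flat at the endpoints of each $I_j$, and rises from $0$ to $\delta$ and back to $0$ on $I_j$ with $\lvert s'\rvert\le\eps/2$, and a smooth path $q\colon[0,L]\to N$ that equals $p_j$ on $I_j$, equals $p_1$ near $0$ and $p_k$ near $L$, is constant or runs along a minimizing geodesic at speed $\le\eps/2$ on each of the remaining intervals, and is flat at the $\partial I_j$. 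All of this fits once $L$ is large enough, which fixes $L$ in the step above. Then I would define $f\colon M\to N$ by $f\equiv p_1$ on $M_1$, $f\equiv p_k$ on $M_2$, and, in coordinates $(\theta,t)\in S^{n-1}\times[0,L]$ on $Z$, by $f(\theta,t)=\exp_{p_j}\!\bigl(s(t)\Psi_j(\theta)\bigr)$ for $t\in I_j$ and $f(\theta,t)=q(t)$ otherwise. The flatness of $s$ and $q$ at the $\partial I_j$ makes the two formulas on $Z$ agree to infinite order and makes $f$ locally constant near the two gluing spheres, so $f$ is smooth.

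It then remains to check the two properties. Surjectivity is immediate: as $(\theta,t)$ ranges over $S^{n-1}\times I_j$ the point $\exp_{p_j}(s(t)\Psi_j(\theta))$ runs through all of $\overline{\Ball}(p_j,\delta)$, and the balls $\overline{\Ball}(p_j,\delta)$ cover $N$. For the Lipschitz bound, $f$ is constant on $M_1$ and $M_2$, and on $Z$ the tangent space splits orthogonally into $\R\partial_t$ and the tangent space of the cross-section $(S^{n-1},\rho^2 g_{\mathrm{std}})$: one checks that $df(\partial_t)$ has norm $\le\max(\lvert s'\rvert,\lvert q'\rvert)\le\eps/2$, while a unit vector tangent to the cross-section is $\rho^{-1}w$ with $w$ of unit standard length, and $df$ sends it to $d(\exp_{p_j})_{s(t)\Psi_j(\theta)}$ applied to a vector of norm $s(t)/\rho$ tangent to the sphere of radius $s(t)\le\delta$, hence of norm $\le2\delta/\rho\le\eps/2$ by the estimate above; so $\lVert d_xf\rVert\le\sqrt{(\eps/2)^2+(\eps/2)^2}<\eps$ everywhere, and $f$ is $\eps$-Lipschitz. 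The only genuinely delicate point in this plan is the first step: producing the long thin neck while keeping the \emph{uniform} positive lower bound $\scal_{g_M}\ge S_0$, rather than merely $\scal>0$. This is exactly where $n\ge3$ is used, and it rests on the Gromov--Lawson surgery construction combined with the rescaling trick.
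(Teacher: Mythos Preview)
Your proposal is correct and follows the same core strategy as the paper: use Gromov--Lawson surgery and rescaling to put a metric on $M$ with $\scal\ge S_0$ containing an arbitrarily long thin product cylinder, make $f$ constant on the complement, and let $f$ sweep over $N$ along the cylinder via exponential maps, traversed slowly enough to be $\eps$-Lipschitz. The one place where the two arguments diverge is in how the cylinder is mapped onto $N$. The paper first projects $[0,l]\times\S^{n-1}_\delta$ onto the solid box $[0,l]\times\Disk^{n-1}_\delta$ via the hemispherical projection, and then maps the box onto $N$ using a single $\delta$-dense arc-length curve $\gamma$ in $N$ together with a parallel orthonormal frame of $\dot\gamma^\perp$; this factorization makes the Lipschitz estimate and smoothness essentially automatic. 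You instead choose a finite $\delta$-net $\{p_j\}$, cover $N$ by the balls $\overline{\Ball}(p_j,\delta)$, and spend one sub-interval $I_j$ of the cylinder on each ball via $\exp_{p_j}(s(t)\Psi_j(\theta))$, with transit intervals running along a slow path $q$ between net points. Your version avoids parallel transport but requires the careful flatness matching at the $\partial I_j$; the paper's version avoids all patching but uses a frame along a curve. Neither gains anything substantial over the other, and the essential point---that $n\ge3$ is precisely what provides the long positively curved neck---is identical in both.
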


Applying \cref{thmB} with $N=\S^n$, $S_0=n(n-1)$ and $\eps=1$ shows that the answer to the question is negative in dimensions at least $3$.
Note that the spin condition plays no role in our considerations as no Dirac operator methods will be used.

\subsection*{Acknowledgments} 
It is our pleasure to thank Bernhard Hanke and Thomas Schick for asking the question at various occasions as well as Georg Frenck and Alexander Lytchak for stimulating discussions.

\medskip

{\footnotesize\noindent Funded by the European Union (ERC Starting Grant 101116001 – COMSCAL). Views and opinions expressed are however those of the author(s) only and do not necessarily reflect those of the European Union or the European Research Council. Neither the European Union nor the granting authority can be held responsible for them.

\noindent Funded by the Deutsche Forschungsgemeinschaft (DFG, German Research Foundation) – Project numbers
390685587, 441731261, 523079177.}

\section{Proof of Theorem~\ref{Ricci_volume_llarull}}
\label{sec.A}

For the proof of \cref{Ricci_volume_llarull}, we need a version of Lipschitz-volume rigidity.
We start with the following fundamental lemma on Lipschitz homeomorphisms.

\begin{lemma}\label{lip_homeomorphism}
Let \(M\) and \(N\) be $n$-dimensional connected closed Riemannian manifolds.
Let $f\colon M\to N$ be a homeomorphism which is a Lipschitz map with Lipschitz constant \(L\ge 1\).
Assume that at almost all points of $M$, the differential $\dd f$ of $f$ satisfies $\lvert \det(\dd f)\rvert=1$.

Then \(f^{-1}\colon N\to M\) is a Lipschitz map with Lipschitz constant $\le L^{n-1}$.
\end{lemma}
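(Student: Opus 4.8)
The plan is to estimate the Lipschitz constant of $f^{-1}$ by comparing lengths of curves, using the key fact that the product of the singular values of $df$ equals $1$ a.e. and that the largest singular value is bounded by $L$. The crucial pointwise inequality is: if $A\colon V \to W$ is linear between $n$-dimensional Euclidean spaces with largest singular value $\mu_n \le L$ and $|\det A| = \mu_1 \cdots \mu_n = 1$, then the smallest singular value satisfies $\mu_1 = 1/(\mu_2\cdots\mu_n) \ge 1/L^{n-1}$; equivalently, $|Av| \ge L^{-(n-1)}|v|$ for all $v$, i.e., wherever $A$ is invertible, $\|A^{-1}\| \le L^{n-1}$. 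So morally $d(f^{-1})$ has operator norm $\le L^{n-1}$ wherever it exists, and one wants to integrate this along curves.

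First I would fix a rectifiable (say, piecewise smooth) curve $\gamma$ in $N$ joining two points $p,q$, and consider $\sigma = f^{-1}\circ\gamma$ in $M$; the goal is $\len(\sigma) \le L^{n-1}\len(\gamma)$, which upon taking infima over $\gamma$ gives $d_M(f^{-1}(p), f^{-1}(q)) \le L^{n-1} d_N(p,q)$. The honest difficulty is that $f^{-1}$ is only known to be a homeomorphism, not a priori Lipschitz or even absolutely continuous along curves, so one cannot naively differentiate $\sigma$ and integrate $|df^{-1}(\gamma')|$. The standard way around this is to argue in the other direction with $f$ itself: since $f$ is $L$-Lipschitz it is absolutely continuous, in particular $f$ maps null sets to null sets, it is differentiable a.e.\ (Rademacher), and it satisfies the area formula. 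One then shows $f^{-1}$ is absolutely continuous on almost every curve by a Fubini-type / coarea argument, or alternatively one invokes that a Lipschitz homeomorphism between manifolds whose Jacobian is bounded away from $0$ a.e.\ has locally bounded inverse — but the cleanest self-contained route is the metric one below.

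The step I expect to be the main obstacle is precisely controlling $\len(\sigma)$ for $\sigma = f^{-1}\circ\gamma$ without assuming regularity of $f^{-1}$. Here is the route I would take. By the area formula for the $L$-Lipschitz map $f$, for any Borel set $E\subseteq M$,
\[
\int_{N} \#(f^{-1}(y)\cap E)\,\mathsf{dV}_N(y) \;=\; \int_{E} \lvert \det d_xf\rvert \,\mathsf{dV}_M(x) \;=\; \vol(E),
\]
using $|\det df| = 1$ a.e.; since $f$ is a bijection the left side is $\vol(f(E))$, so $f$ is volume-preserving, hence $f^{-1}$ is volume-preserving as well and in particular $f^{-1}$ maps null sets to null sets. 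Combined with the fact that $f^{-1}$ is a homeomorphism between manifolds, this lets one invoke Rademacher-type differentiability: $f^{-1}$ is of bounded variation / is differentiable a.e.\ where it matters, and at a.e.\ point $x\in M$ where $d_xf$ exists and is invertible (which is a.e., since $|\det d_xf|=1\ne0$), the inverse map is differentiable at $f(x)$ with $d_{f(x)}(f^{-1}) = (d_xf)^{-1}$, whose operator norm is $\le L^{n-1}$ by the singular-value computation above. One then needs the ``Lipschitz-volume rigidity'' input: a homeomorphism between Riemannian manifolds which is differentiable a.e.\ with a.e.\ operator norm bound $C$, and which (together with its inverse) sends null sets to null sets, is in fact globally $C$-Lipschitz. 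This is where I would cite or adapt the argument from \cite{burago-ivanov}*{Lemma~9.1}: the ACL (absolute continuity on lines/curves) property follows because the map does not collapse positive-measure sets, and then integrating the a.e.\ bound $\|d(f^{-1})\| \le L^{n-1}$ along curves yields $\len(f^{-1}\circ\gamma) \le L^{n-1}\len(\gamma)$ for all rectifiable $\gamma$, hence the claimed Lipschitz bound on $f^{-1}$. The only genuinely delicate point is verifying the ACL/absolute continuity of $f^{-1}$ along almost every curve from the volume-preservation; everything else is the elementary singular value inequality and a length-comparison.
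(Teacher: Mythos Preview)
Your pointwise singular-value inequality is correct and is the right heuristic: wherever $d_xf$ exists with $|\det d_xf|=1$ and $\|d_xf\|\le L$, the smallest singular value is $\ge L^{-(n-1)}$, so $\|(d_xf)^{-1}\|\le L^{n-1}$. The gap is in transferring this to a global Lipschitz bound on $f^{-1}$. You assert that volume preservation of $f^{-1}$ yields ``Rademacher-type differentiability'' of $f^{-1}$ and the ACL property; neither follows. Rademacher's theorem requires a Lipschitz (or at least Sobolev) hypothesis on $f^{-1}$, which is precisely what you are trying to prove, and a volume-preserving homeomorphism need not be differentiable a.e.\ or absolutely continuous on lines. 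Nor does invertibility of $d_xf$ at a single point give differentiability of $f^{-1}$ at $f(x)$: the inverse function theorem needs continuity of the derivative, which a merely Lipschitz $f$ does not provide. Your citation of \cite{burago-ivanov}*{Lemma~9.1} does not close the gap either, since that lemma takes a Lipschitz hypothesis on the map in question as input. The route you sketch can in fact be completed, but only by invoking regularity theory for Sobolev homeomorphisms or mappings of finite distortion (an $L$-Lipschitz homeomorphism with Jacobian bounded below is quasiconformal, hence its inverse lies in $W^{1,n}_{\mathrm{loc}}$ and is ACL); you would need to cite such a result explicitly rather than appeal to volume preservation.

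The paper sidesteps this issue entirely by never differentiating $f^{-1}$. Given nearby points $y_1,y_2\in N$ joined by a minimal geodesic $\gamma$, it considers the continuous curve $c=f^{-1}\circ\gamma$ in $M$ and compares \emph{tube volumes}: since $f$ is volume-preserving and $L$-Lipschitz, $\vol_M(\Ball_r(c([0,1])))\le \vol_N(\Ball_{Lr}(\gamma([0,1])))$ for all small $r>0$. Pulling back via $\exp_x$, comparing the tube around $\exp_x^{-1}\circ c$ with the tube around the straight segment joining its endpoints, and using the asymptotics $\vol(\Ball_\rho(\text{smooth curve}))=\len\cdot\omega_{n-1}\rho^{n-1}+\O(\rho^{n+1})$, one obtains $\dist_M(f^{-1}(y_1),f^{-1}(y_2))\le L^{n-1}\dist_N(y_1,y_2)$ in the limit $r\to 0$. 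The exponent $n-1$ appears as the codimension of a curve in the tube formula rather than via singular values, and the argument uses only continuity of $f^{-1}$ together with the volume-preserving Lipschitz property of $f$.
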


For any subset $A$ of a metric space $(X,\dist)$ and positive number $r$ we denote by $\Ball_r(A)=\{x\in X\mid \dist(x,A)<r\}$ the open $r$-neighborhood of $A$ in $X$.
In particular, if $A=\{x\}$ is a point, $\Ball_r(x)$ is the open $r$-ball about $x$ in $X$.

By $\omega_k$ we denote the volume of the $k$-dimensional Euclidean unit ball.

\begin{proof}[Proof of \cref{lip_homeomorphism}]
Fix $\eps>0$.
We choose $r_0>0$ so small that 
\begin{enumerate}[label=$\triangleright$]
\item $r_0$ is smaller than the convexity radii of $M$ and $N$,
\item the singular values of the differential $\dd_v\exp_p$ of the Riemannian exponential map of $M$ lie between $1-\eps$ and $1+\eps$ whenever $v\in T_pM$ with $\lvert v\rvert<r_0$ and $p\in M$.
\end{enumerate}
Let \(x \in M\) and \(y = f(x) \in N\).
Choose $r_1\in(0,r_0]$ such that $\Ball_{r_1}(x)\subseteq f^{-1}(\Ball_{r_0}(y))$.
Since $f$ is a homeomorphism, $f(\Ball_{r_1}(x))$ is an open neighborhood of $y$ in $N$.
Choose $r_2\in(0,r_1)$ such that \(\Ball_{r_2}(y)\subseteq f(\Ball_{r_1}(x))\), see \cref{fig.3}.
\begin{figure}[h]
\centering
\begin{overpic}[width=1\textwidth]{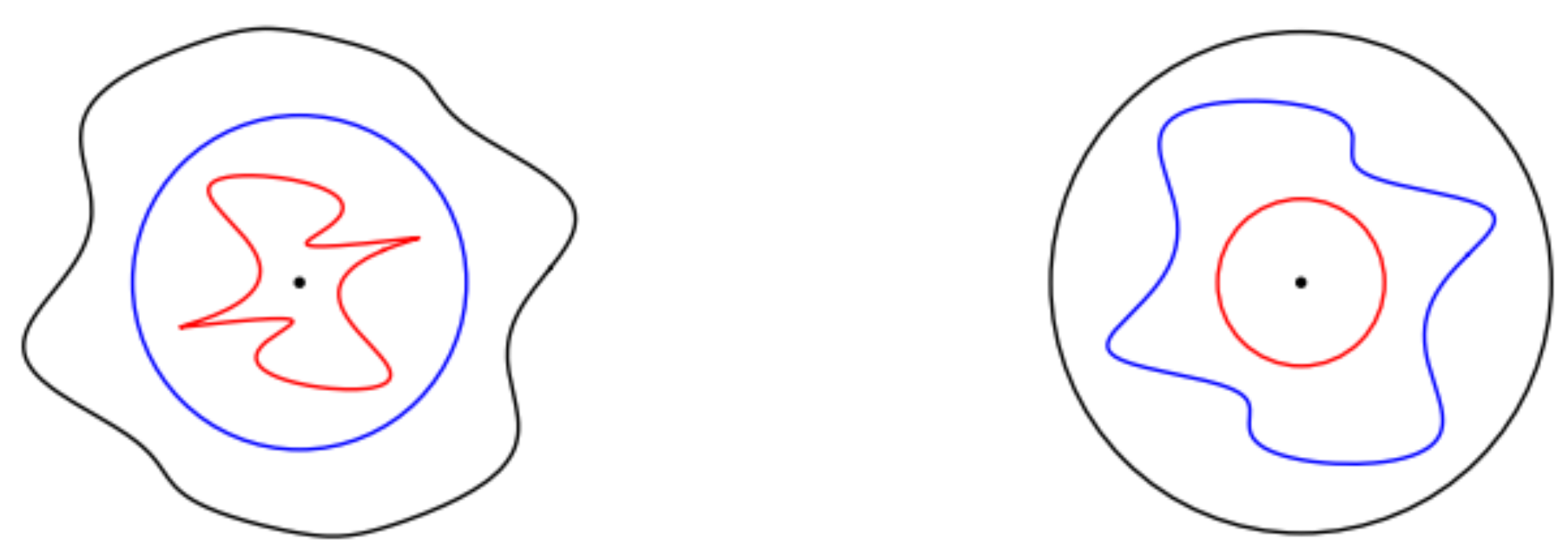}
\put(17.5,17.8){$x$}
\put(82,15){$y$}
\put(81,9){$\textcolor{red}{\Ball_{r_2}(y)}$}
\put(16,3.3){$\textcolor{blue}{\Ball_{r_1}(x)}$}
\put(63,30){$\textcolor{black}{\Ball_{r_0}(y)}$}
\put(46,18){$\xrightarrow{\quad f\quad}$}
\end{overpic}
\caption{Choice of neighborhoods}
\label{fig.3}
\end{figure}

Now let $y_1,y_2 \in \Ball_{r_2}(y)$ and put $x_j \coloneq f^{-1}(y_j)$.
Denote the shortest geodesic from $y_1$ to $y_2$ by $\gamma\colon [0,1]\to N$.
Since $r_2$ is smaller than the convexity radius of $N$, the curve $\gamma$ is entirely contained in $\Ball_{r_2}(y)$.
Thus, $c \coloneq f^{-1}\circ\gamma\colon [0,1]\to M$ is a continuous curve connecting $x_1$ and $x_2$ which is contained in $\Ball_{r_1}(x)$.

By the Lipschitz property of $f$, we have $f(B_r(c([0,1])))\subseteq \Ball_{Lr}(\gamma([0,1]))$ for any $r>0$.
Since $f$ is volume preserving, this implies 
\begin{equation}
\vol_M(\Ball_r(c([0,1]))) \le \vol_N(\Ball_{Lr}(\gamma([0,1]))).
\label{eq.lip1}
\end{equation}

Let $v_1,v_2\in \Ball_{r_1}(0)\subseteq T_x M$ be the tangent vectors with $\exp_{x}(v_j)=x_j$.
We obtain a second curve $\bar{c}\colon [0,1]\to M$ connecting $x_1$ and $x_2$ by putting $\bar{c}(t) \coloneq \exp_x(\ell(t))$ where $\ell(t)=tv_2+(1-t)v_1$ is the straight line connecting $v_1$ and $v_2$.
This curve is also contained in $\Ball_{r_1}(x)$, see \cref{fig.2}.
\begin{figure}[h]
\centering
\begin{overpic}[width=1\textwidth]{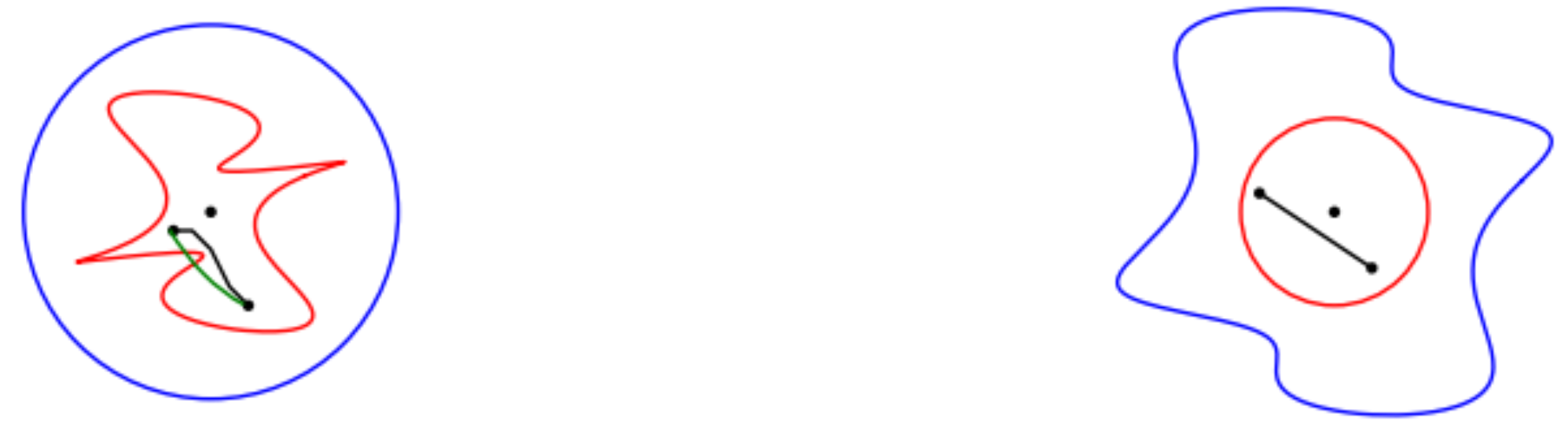}
\put(7,14){$x_1$}
\put(16.2,7){$x_2$}
\put(81,16){$y_1$}
\put(87,11.7){$y_2$}
\put(82,10.5){$\gamma$}
\put(14.2,10.5){$c$}
\put(11.5,7.5){\textcolor{green!50!black}{$\bar{c}$}}
\put(42,15){$\xrightarrow{\quad f\quad}$}
\end{overpic}
\caption{Connecting curves}
\label{fig.2}
\end{figure}

For subsets of $T_xM$ we measure distances and volumes with respect to the Euclidean metric given by the Riemannian metric at $x$.
\smallskip

\emph{Claim:} For all $r>0$ we have 
\begin{equation}
\vol_{T_xM}(\Ball_r(\ell([0,1]))) \le \vol_{T_xM}(\Ball_r(\exp_x^{-1}\circ c([0,1]))) .
\label{eq.lip2}
\end{equation}

\emph{Proof of claim:}
Denote the affine hyperplane in $T_xM$ through $v_1=\ell(0)$ which is perpendicular to the line $\ell([0,1])$ by $H$ and let $H_0$ be the corresponding vector subspace of $T_xM$.
Denote the open $r$-ball in $H_0$ about the origin by $\Disk_r$.
The tube $\Ball_r(\ell([0,1]))$ can be explicitly written as
\[
\Ball_r(\ell([0,1])) 
= 
Y_1 \cup \ell([0,1])\times\Disk_r \cup Y_2
\]
where $Y_1$ and $Y_2$ are the two \enquote{outer} open half-balls of radius $r$ at the endpoints $v_1$ and $v_2$ of the line $\ell([0,1])$.
We define the subset $X\subset T_xM$ by attaching the $(n-1)$-ball $\Disk_r$ to each point of the curve $\exp_x^{-1}\circ c$.
More precisely,
\[
X \coloneq Y_1 \cup \bigcup_{t\in[0,1]}(\Disk_r+\exp_x^{-1}(c(t))) \cup Y_2,
\]
see \cref{fig.4}.
\begin{figure}[h]
\centering
\begin{overpic}[width=1\textwidth]{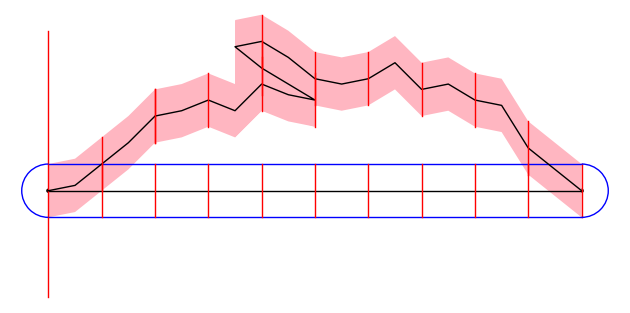}
\put(4,19){$v_1$}
\put(93,19){$v_2$}
\put(8,40){\textcolor{red}{$H$}}
\put(53,41){\textcolor{red}{$X$}}
\put(81,33){$\exp_x^{-1}\circ c$}
\put(45,12){\textcolor{blue}{$\Ball_r(\ell([0,1]))$}}
\put(46,16){$\ell$}
\put(1,23){\textcolor{blue}{$Y_1$}}
\put(96,23){\textcolor{blue}{$Y_2$}}
\end{overpic}
\caption{Construction of the set $X$}
\label{fig.4}
\end{figure}
Each point of $X$ is at distance $<r$ from some point of $\exp_x^{-1}\circ c([0,1])$, hence we have 
\begin{equation}
X\subseteq \Ball_r(\exp_x^{-1}\circ c([0,1])).
\label{eq.claim1}
\end{equation}
Moreover, for each $t\in[0,1]$ the hyperplane $H_0+\ell(t)$ intersects $\exp_x^{-1}\circ c([0,1])$ because $\exp_x^{-1}\circ c([0,1])$ is connected.
Therefore, the coarea formula yields
\[
\vol_{T_xM}(X)
\ge
\vol_{T_xM}(Y_1) + \len_{T_xM}(\ell)\cdot\vol_{H_0}(D_r) + \vol_{T_xM}(Y_2)
=
\vol_{T_xM}(\Ball_r(\ell([0,1]))) .
\]
Together with \eqref{eq.claim1} this proves the claim.
\hfill\checkmark

\smallskip

For sufficiently small $r>0$, we have $\Ball_r(\ell([0,1]))\subseteq \Ball_{r_1}(0)$, $\Ball_r(\exp_x^{-1}\circ c([0,1]))\subseteq \Ball_{r_1}(0)$, $\Ball_r(c([0,1]))\subseteq \Ball_{r_1}(x)$, and $\Ball_r(\bar{c}([0,1]))\subseteq \Ball_{r_1}(x)$.
The bounds on the singular values of the differential of $\dd\exp_x$ imply $\Ball_{(1-\eps)r}(\bar{c}([0,1]))\subseteq \exp_x(\Ball_r(\ell([0,1])))\subseteq \Ball_{(1+\eps)r}(\bar{c}([0,1]))$ and hence
\begin{align}
(1-\eps)^n\,\vol_M\big(\Ball_{(1-\eps)r}(\bar{c}([0,1]))\big)
&\leq 
\vol_{T_xM}\big(\Ball_r(\ell([0,1]))\big) \notag\\
&\leq 
(1+\eps)^n\,\vol_M\big(\Ball_{(1+\eps)r}(\bar{c}([0,1]))\big) .
\label{eq.lip3}
\end{align}
Similarly, we get
\begin{align}
(1-\eps)^n\,\vol_M\big(\Ball_{(1-\eps)r}(c([0,1]))\big)
&\leq 
\vol_{T_xM}\big(\Ball_r(\exp_x^{-1}\circ c([0,1]))\big) \notag\\
&\leq 
(1+\eps)^n\,\vol_M\big(\Ball_{(1+\eps)r}(c([0,1]))\big) .
\label{eq.lip4}
\end{align}
Combining these estimates, we find
\begin{align}
\vol_N(\Ball_{L(1+\eps)r}(\gamma([0,1])))
&\stackrel{\eqref{eq.lip1}}{\geq}
\vol_M(\Ball_{(1+\eps)r}(c([0,1]))) \notag\\
&\stackrel{\eqref{eq.lip4}}{\geq}
(1+\eps)^{-n}\,\vol_{T_xM}\big(\Ball_r(\exp_x^{-1}\circ c([0,1]))\big) \notag\\
&\stackrel{\eqref{eq.lip2}}{\geq}
(1+\eps)^{-n}\,\vol_{T_xM}(\Ball_r(\ell([0,1]))) \notag\\
&\stackrel{\eqref{eq.lip3}}{\geq}
\big[\tfrac{1-\eps}{1+\eps}\big]^n\,\vol_{M}(\Ball_{(1-\eps)r}(\bar{c}([0,1]))) .
\label{eq.lip5}
\end{align}
The power series expansion of the volume of thin tubes around a smooth curve yields
\begin{align*}
\vol_M(\Ball_{\rho}(\bar{c}([0,1]))) 
=
\len_M(\bar{c})\cdot\omega_{n-1}\cdot \rho^{n-1} + \O(\rho^{n+1})
\quad \text{ as }\rho\searrow 0 \, ,
\end{align*}
and similarly for the curve $\gamma$ in $N$.
Dividing \eqref{eq.lip5} by $\omega_{n-1}\cdot r^{n-1}$ and letting $r$ tend to $0$, we obtain
\[
L^{n-1}\,(1+\eps)^{n-1}\,\len_N(\gamma) \geq \big[\tfrac{1-\eps}{1+\eps}\big]^n\,(1-\eps)^{n-1}\,\len_M(\bar{c}).
\]
This yields
\[
L^{n-1}\cdot\dist(y_1,y_2)
=
L^{n-1}\cdot\len_N(\gamma)
\ge
\big[\tfrac{1-\eps}{1+\eps}\big]^{2n-1}\,\len_M(\bar{c})
\ge
\big[\tfrac{1-\eps}{1+\eps}\big]^{2n-1}\,\dist(x_1,x_2).
\]
This shows that $f^{-1}$ is Lipschitz with Lipschitz constant $\le L^{n-1}\cdot\big[\tfrac{1+\eps}{1-\eps}\big]^{2n-1}$ on the ball $\Ball_{r_2}(y)$.
Since $y$ is arbitrary, $f^{-1}$ is locally Lipschitz on $N$ with this global bound on the (local) Lipschitz constant.
Thus $f^{-1}$ is globally $\big(\big[\tfrac{1+\eps}{1-\eps}\big]^{2n-1}\cdot L^{n-1}\big)$-Lipschitz on $N$.
The limit $\eps\searrow 0$ concludes the proof.
\end{proof}

\begin{proposition}\label{1lip_injectivity}
Let \(M\) and \(N\) be connected closed Riemannian manifolds of the same dimension \(n \geq 1\).
Let \(f \colon M \to N\) be an $L$-Lipschitz map such that \(\vol(M) \le \vol_N(f(M))\) and \(\lvert\det(\dd f)\rvert\le1\) almost everywhere.

Then \(f\) is a homeomorphism and $f^{-1}\colon N\to M$ is $L^{n-1}$-Lipschitz.
In particular, if $f$ is $1$-Lipschitz, it is an isometry.
Moreover, for every measurable subset \(A \subseteq M\), we have \(\vol_N(f(A)) = \vol_M(A)\).
\end{proposition}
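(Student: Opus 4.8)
The plan is to extract from the area formula that $f$ is \enquote{almost bijective and volume preserving}, then to upgrade this to a genuine homeomorphism by a local degree argument, and finally to quote \cref{lip_homeomorphism}.

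First I would apply the area formula to the Lipschitz map $f$: for every measurable $A\subseteq M$,
\[
\int_A \lvert\det df\rvert\, d\vol_M=\int_N \#\bigl(f^{-1}(y)\cap A\bigr)\, d\vol_N(y).
\]
Taking $A=M$ and using $\lvert\det df\rvert\le 1$ a.e.\ together with $\#f^{-1}(y)\ge 1$ on $f(M)$ gives $\vol_M(M)\ge\vol_N(f(M))$, so the hypothesis $\vol_M(M)\le \vol_N(f(M))$ forces equality throughout. Hence $\lvert\det df\rvert=1$ almost everywhere on $M$, and $\#f^{-1}(y)=1$ for almost every $y\in f(M)$ (in particular $L\ge1$). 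Reinserting this into the identity for an arbitrary measurable $A$ would give $\vol_N(f(A))=\vol_M(A)$, which is already the last assertion of the proposition. I would also record two consequences: for disjoint measurable $A_1,A_2\subseteq M$ one has $\vol_N(f(A_1)\cap f(A_2))=0$ (a common image point would have two distinct preimages), and consequently every fiber of $f$ is finite, since $k$ distinct preimages of a point $y$ lie in $k$ disjoint small balls with pairwise essentially disjoint images contained in $\Ball_{L\rho}(y)$, forcing $k\le L^n$ as the radius tends to $0$.

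The main obstacle is to promote injectivity almost everywhere to genuine injectivity. To that end, suppose $f(p)=f(q)=y_0$ with $p\ne q$. Since $f^{-1}(y_0)$ is finite, I could choose $\rho>0$ so small that $\Ball_\rho(p)$ and $\Ball_\rho(q)$ are disjoint and neither bounding sphere meets $f^{-1}(y_0)$; then $y_0$ would lie in the complement of the compact set $K:=f(\partial\Ball_\rho(p))\cup f(\partial\Ball_\rho(q))$, and I would let $\Omega$ be the connected component of $N\setminus K$ containing $y_0$. Working with $\mathbb Z/2$-valued local degrees (so that orientations play no role), which are constant on $\Omega$, write $d_p:=\deg_2(f,\Ball_\rho(p),\,\cdot\,)$ and define $d_q$ analogously. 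The set $V_p:=\Ball_\rho(p)\cap f^{-1}(\Omega)$ is open and contains $p$, so $\vol_N(f(V_p))=\vol_M(V_p)>0$; for almost every $y\in f(V_p)$ the fiber $f^{-1}(y)$ is a single point, which must then lie in $V_p\subseteq\Ball_\rho(p)$, and since the mod-$2$ local degree is, for almost every value, the number of preimages in $\Ball_\rho(p)$ taken mod $2$, this would yield $d_p\equiv1$, hence $d_p\ne0$, and symmetrically $d_q\ne0$. But a nonzero local degree forces $\Omega\subseteq f(\Ball_\rho(p))$ and $\Omega\subseteq f(\Ball_\rho(q))$, so $\Omega\subseteq f(\Ball_\rho(p))\cap f(\Ball_\rho(q))$ --- a set of measure zero, contradicting that $\Omega$ is a nonempty open subset of $N$. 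Thus $f$ would be injective. I expect this step to be where the real work lies: the area formula cannot by itself exclude branching, and the local degree is precisely what encodes that a map of multiplicity one and with $\lvert\det df\rvert=1$ almost everywhere cannot fold.

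It then remains to assemble the conclusion. A continuous injection from the compact space $M$ into the Hausdorff space $N$ is a homeomorphism onto $f(M)$, which is compact, hence closed in $N$; by invariance of domain, applied in charts, $f$ is also open, so $f(M)$ is open, and connectedness of $N$ forces $f(M)=N$. Thus $f$ is a homeomorphism with $\lvert\det df\rvert=1$ almost everywhere, so \cref{lip_homeomorphism} gives that $f^{-1}$ is $L^{n-1}$-Lipschitz. If $f$ is $1$-Lipschitz then so is $f^{-1}$, hence $f$ preserves distances and is an isometry (indeed a Riemannian isometry, by \cite{MS}*{Theorem~8}). The volume identity for measurable subsets was already obtained in the first step.
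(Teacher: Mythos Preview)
Your proof is correct and follows essentially the same approach as the paper: area formula to force $\lvert\det df\rvert=1$ a.e.\ and a.e.\ single fibers, volume preservation on all measurable sets, finiteness of fibers via a volume count, a local $\Z_2$-degree argument to upgrade a.e.\ injectivity to genuine injectivity, and finally \cref{lip_homeomorphism}. The only cosmetic differences are that the paper establishes surjectivity first via the global $\Z_2$-degree (rather than afterwards via invariance of domain) and runs the injectivity contradiction through an auxiliary injectivity point near the second preimage rather than your more symmetric local-degree computation at both preimages.
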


\begin{proof}
Let \(A \subseteq M\) be measurable. 
By the coarea formula for Lipschitz maps, the fiber \(f^{-1}(y)\) is finite for almost all \(y \in N\), and we obtain 
\begin{align*}
\vol_M(A) 
&\geq 
\int_{M} \charFun_A \lvert\det(d_x f)\rvert \dV_M \\
&= 
\int_N \sum_{x \in f^{-1}(y)} \charFun_A(x) \dV_N(y) \\
&= 
\int_N \#(f^{-1}(y) \cap A) \dV_N(y) \\
&\geq 
\vol_N(f(A)).
\end{align*}
Applying this to \(A = M\) and using the assumption \(\vol(M) \le \vol_N(f(M))\), we obtain equality in each inequality, which implies that \(\lvert \det(\dd_x f) \rvert = 1\) for almost all \(x \in M\) and \(\# f^{-1}(y) = 1\) for almost all \(y \in f(M)\).
Thus we actually have equality in the previous chain of inequalities for any measurable subset \(A \subseteq M\) which shows that \(\vol_M(A) = \vol_N(f(A))\).

In particular, the set of injectivity points has full measure in $M$.
Here, by \enquote{injectivity point}, we mean a point \(x \in M\) which is the unique preimage of its image under $f$.
Note that for each injectivity point \(x \in M\), the \(\Z_2\)-mapping degree of \(f\) satisfies \(\deg_{\Z_2}(f) = 
\deg_{\Z_2}(f \rvert_x)\), where \(\deg_{\Z_2}(f \rvert_x)\) denotes the local mapping degree of \(f\) at the point \(x\).
Moreover, if \(x \in M\) is a point where \(f\) is differentiable and \(\dd_x f\) is invertible, then \(x\) is a discrete point in the fiber \(f^{-1}(f(x))\) and \(\deg_{\Z_2}(f \rvert_x) = 1\).
Since \(\lvert \det(\dd_x f) \rvert = 1\) almost everywhere on \(M\), the set of such points also has full measure.
Finding an injectivity point \(x \in M\) where \(f\) is differentiable with \(\dd_x f\) invertible, we conclude
\[
   \deg_{\Z_2}(f) = \deg_{\Z_2}(f\rvert_x) = 1.
\]
In particular, \(f\) is surjective.

It remains to verify injectivity.
Since both \(M\) and \(N\) are compact and without boundary, we can find an \(r_0 > 0\) such that for any \(r \leq r_0\), \(x \in M\) and \(y \in N\), we have 
\[
\vol_M(\Ball_r(x)) \geq \tfrac{3}{4} r^n \omega_n, \qquad
\vol_N(\Ball_r(y)) \leq \tfrac{4}{3} r^n \omega_n.
\]

Denote the Lipschitz constant of \(f\) by \(L\).
Let $x_1,\dots,x_N\in M$ be pairwise distinct preimages of a point $y\in N$ under $f$.
Choose \(0 < r \leq \frac{r_0}{L}\) such that the \(\Ball_r(x_i)\) are pairwise disjoint.
For \(i\neq j\) it follows that 
\begin{equation}
\vol_N(f(\Ball_r(x_i)) \cap f(\Ball_r(x_j)))=0
\label{eq.almost_disjoint}
\end{equation}
since \(\# f^{-1}(y) \leq 1\) for almost all \(y \in N\).
Moreover, since \(f\) is \(L\)-Lipschitz, we have \(f(\Ball_r(x_i)) \subseteq \Ball_{Lr}(y)\).
Therefore, we have
\begin{align*}
\tfrac{3N}{4} r^n \omega_n
&\le
\sum_{i=1}^N \vol_M(\Ball_r(x_i)) \\
&=
\sum_{i=1}^N \vol_N(f(\Ball_r(x_i))) \\
&=
\vol_N\bigg(f\Big(\bigcup\nolimits_{i=1}^N \Ball_r(x_i)\Big)\bigg) \\
&\le
\vol_N(\Ball_{Lr}(y)) \\
&\le 
\tfrac43 L^nr^n\omega_n.
\end{align*}
Hence \(N\le \tfrac{16}{9} L^n\).
In particular, each \(y\in N\) has only finitely many preimages under \(f\).

Since the $\Z_2$-mapping degree of \(f\) is nontrivial and coincides with the sum of the local $\Z_2$-mapping degrees of the preimages, there must be a preimage \(x \in M\) of \(y\) such that the local mapping degree at \(x\) is nontrivial.
In particular, \(f(\Ball_r(x))\) contains an open neighborhood of \(y\), i.e.\ \(\Ball_\rho(y)\subseteq f(\Ball_r(x))\) for some \(\rho>0\).
Assume that there is another preimage \(x'\in M\) of \(y\). 
Choose an injectivity point $x''\in \Ball_{\rho/2L}(x')$.
Then we have \(f(x'')\in \Ball_{\rho/2}(y)\) and the local $\Z_2$-mapping degree of $f$ at $x''$ is nontrivial.
Hence $f(\Ball_{\rho/2L}(x''))$ contains an open neighborhood of $f(x'')$.
Note that \(f(\Ball_{\rho/2L}(x''))\subseteq \Ball_{\rho/2}(f(x''))\subseteq \Ball_\rho(y)\), see Figure~\ref{fig.1}.
Hence, \(f(D_r(x))\cap f(D_r(x'))\) contains a non-empty open set, contradicting \eqref{eq.almost_disjoint}.

\begin{figure}[h]
\centering
\begin{overpic}[width=0.8\textwidth]{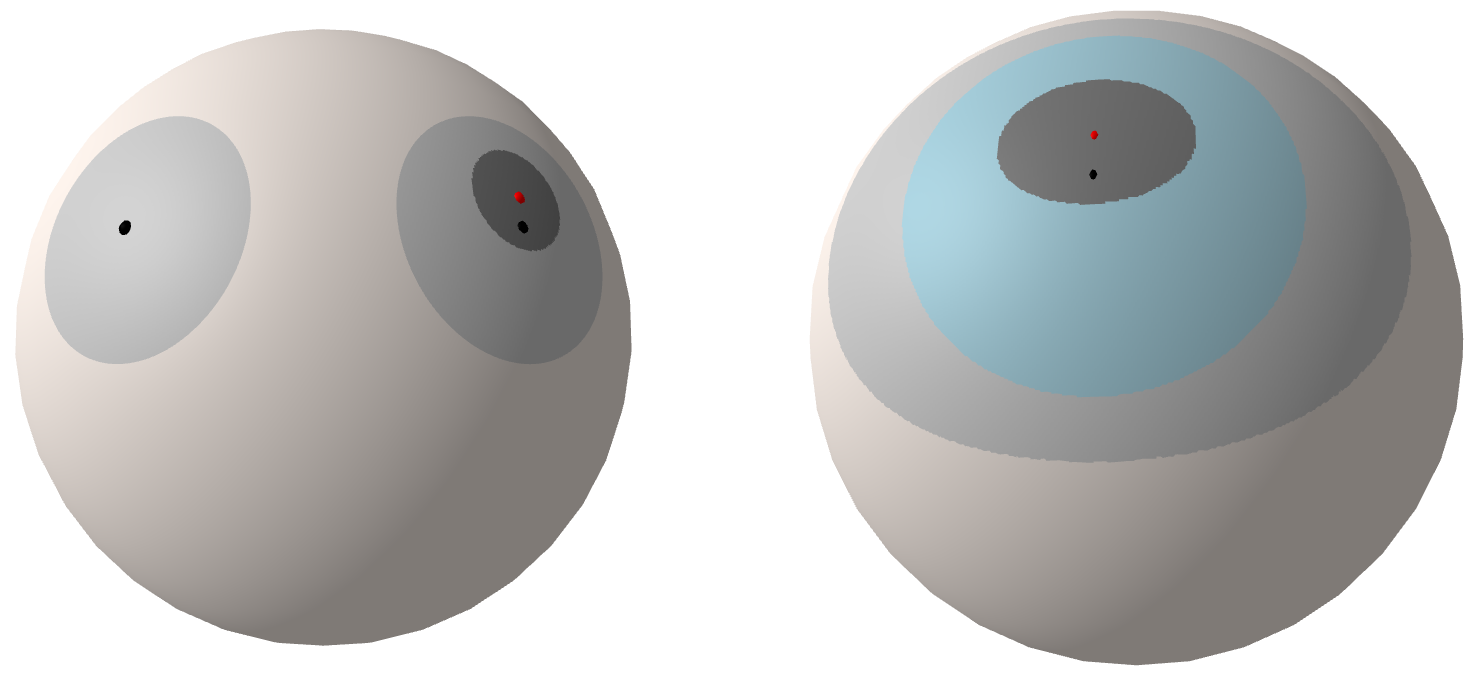}
\put(9.5,30){$x$}
\put(34.5,28){$x'$}
\put(34,33.5){$\textcolor{red}{x''}$}
\put(72.5,33.5){$y$}
\put(70.5,38){$\textcolor{red}{f(x'')}$}
\put(44.5,24){$\xrightarrow{\quad f\quad}$}
\put(5,25){$\Ball_r(x)$}
\put(31.5,24){$\Ball_r(x')$}
\put(36,31){$\Ball_{\rho/2L}(x'')$}
\put(67,16.2){$f(\Ball_{r}(x))$}
\put(69,25){$\Ball_{\rho}(y)$}
\put(75.5,34.5){$f(\Ball_{\rho/2L}(x''))$}
\end{overpic}
\caption{Two potential preimages \(x\) and \(x'\) of the same point \(y\in N\) under \(f\).}
\label{fig.1}
\end{figure}

This shows that \(f\) is injective.
Every bijective continuous map between compact manifolds is a homeomorphism.
\cref{lip_homeomorphism} now shows that $f^{-1}$ is $L^{n-1}$-Lipschitz.
In particular, if \(f\) is \(1\)-Lipschitz, then so is \(f^{-1}\) and hence \(f\) is an isometry.
\end{proof}

\begin{proof}[Proof of \cref{Ricci_volume_llarull}]
   Since \(\Ric \geq n-1\), the Bishop--Gromov volume comparison theorem (see e.g.~\cite{Gallot-Hulin-Lafontaine}*{Theorem~4.19}) yields \(\vol(M) \leq \vol(\Sphere^{n})\).
   \Cref{1lip_injectivity} now implies that \(f\) is a bi-Lipschitz homeomorphism and that $\vol(M)=\vol(\Sphere^{n})$.
   Thus, \(M\) is isometric to \(\Sphere^n\) by the equality discussion of Bishop--Gromov volume comparison~\cite{Gallot-Hulin-Lafontaine}*{Theorem~4.20}.
   Moreover, if \(f\) is \(1\)-Lipschitz, it is an isometry, again by \cref{1lip_injectivity}.
\end{proof}

\section{Proof of Theorem~\ref{thmB}}
\label{sec.B}

We start with the construction of the metric on $M$.

\begin{lemma}\label{existence_of_the_wurst}
   Let \(M\) be a closed connected $n$-dimensional manifold which admits a Riemannian metric of positive scalar curvature with $n\ge3$.

   Then for each \(S_0 > 0\) there exists a constant \(\delta(M,S_0) > 0\), such that for each $\delta\in(0,\delta(M,S_0)]$ and each \(l > 0\) there exists a Riemannian metric \(g_M\) on \(M\) with \(\scal_{g_M} \geq S_0\) and an isometric embedding \([0,l] \times \Sphere^{n-1}_\delta \hookrightarrow (M, g_M)\). 
   Moreover, we can ensure that \(M \setminus ([0,l] \times \Sphere^{n-1}_\delta)\) is disconnected.
\end{lemma}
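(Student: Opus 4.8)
The plan is to produce $g_M$ by a Gromov--Lawson-type surgery. Starting from any positive-scalar-curvature metric on $M$, we first rescale it to make its scalar curvature large, pick a point $p \in M$, and then replace a small geodesic ball $\Ball_\rho(p)$ by a long, thin \emph{torpedo}: a rotationally symmetric metric on the disk $\Disk^n$ which is a round spherical cap near the tip, then a straight cylinder $[0,l]\times\S^{n-1}_\delta$, and finally a short transition region that matches back onto the original metric across $\partial\Ball_\rho(p)$. Since we replace a ball by a ball, the resulting manifold is diffeomorphic to $M$, and the inserted cylinder has one boundary sphere bounding the tip cap and the other facing the rest of $M$, so removing it disconnects $M$. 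Everything therefore reduces to carrying out the surgery while keeping $\scal \ge S_0$.

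In more detail, fix a metric $h$ on $M$ with $\scal_h \ge \sigma > 0$ and set $g_0 = \lambda^2 h$ with $\lambda$ so small that $\scal_{g_0} \ge 2S_0$. In geodesic normal coordinates around $p$, the metric $g_0$ is $C^2$-close, on $\Ball_{2\rho}(p)$, to the Euclidean warped product $dr^2 + r^2 g_{\S^{n-1}}$, with closeness as good as we like once $\rho$ is small. Following Gromov--Lawson, we prescribe a profile curve $\gamma$ in the $(r,t)$-half-plane which starts out vertical (reproducing $g_0$ near $\partial\Ball_\rho(p)$), bends through a circular arc of radius comparable to $\delta$, and then runs vertically at height $\delta$, where the resulting warped-product metric becomes exactly the round cylinder $\S^{n-1}_\delta \times \mathbb{R}$. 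We extend this cylindrical piece to length $l$ and cap it off on the inside with a torpedo cap built from a hemisphere of $\S^n_\delta$. This yields a smooth metric $g_M$ on $M$ that agrees with $g_0$ outside $\Ball_\rho(p)$ and contains an isometrically embedded copy of $[0,l]\times\S^{n-1}_\delta$.

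It then remains to bound $\scal_{g_M}$ below by $S_0$. Outside $\Ball_\rho(p)$ we have $\scal_{g_M} = \scal_{g_0} \ge 2S_0$. On the cylindrical part $\scal_{g_M} = \frac{(n-1)(n-2)}{\delta^2}$, and the torpedo cap can be built so that $\scal_{g_M} \ge \frac{(n-1)(n-2)}{\delta^2}$ there as well; this quantity is $\ge 2S_0$ as soon as $\delta \le \sqrt{(n-1)(n-2)/(2S_0)}$, and here the hypothesis $n \ge 3$ is essential, since $(n-1)(n-2)$ is the positive factor making the cross-sectional sphere $\S^{n-1}_\delta$ positively curved. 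On the transition region, the standard Gromov--Lawson computation gives $\scal_{g_M} \ge \inf_{\Ball_\rho(p)}\scal_{g_0} - E$, where the error $E$ comes from the bending of $\gamma$ and from the deviation of $g_0$ from the Euclidean warped product; one arranges $E \le S_0$ by first choosing $\rho$ small (controlling the deviation) and then $\delta$ small (controlling the bending). Taking $\delta(M,S_0)$ to be the minimum of the two resulting thresholds completes the argument; once $\delta$ is fixed, the cylinder length $l$ is unconstrained because lengthening a cylinder does not change its scalar curvature.

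The main obstacle is this scalar-curvature estimate on the transition region: one must verify that the negative curvature produced by bending the profile curve is dominated by the large positive term $\frac{(n-1)(n-2)}{\delta^2}$ coming from the shrinking round fibers, uniformly over the transition region and for all $\delta$ below the threshold. This is precisely the content of the classical Gromov--Lawson surgery lemma for surgeries of codimension $\ge 3$ (here a connected-sum-type surgery on $S^0$), so one either reproduces that computation or invokes a quantitative version of it. The remaining points---smoothly fitting the three pieces together, identifying the result diffeomorphically with $M$, and checking that the complement of the cylinder is disconnected---are routine.
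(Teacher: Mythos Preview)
Your proposal is correct and uses the same Gromov--Lawson neck idea as the paper, but with the order of operations reversed in a way worth pointing out. You rescale first to $\scal_{g_0}\ge 2S_0$ and then perform a \emph{quantitative} GL surgery, tracking the scalar-curvature lower bound through the transition region; this is why you identify that transition-region estimate as the main obstacle. The paper sidesteps this by doing the surgery first and rescaling afterwards: it fixes a positive-scalar-curvature metric on the open disk $\Disk^n$ already containing a half-infinite cylinder $[0,\infty)\times\S^{n-1}$ of radius~$1$, takes a GL connected sum of $(M,g_0)$ with this disk---invoking \cite{GL}*{Theorem~A} purely as a black box preserving \emph{positive} scalar curvature---and only then rescales the entire resulting metric by $\delta^2$, which simultaneously forces $\scal\ge S_0$ and shrinks the cylinder radius to~$\delta$; finally the half-infinite cylinder is truncated at length~$l$ and capped. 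Thus the paper never needs the quantitative estimate you flag, at the cost of a slightly less direct construction; your route is more explicit about the local geometry but requires opening up the GL computation.

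One small wording issue: you say the bending error $E$ is controlled by taking $\delta$ small, but in the GL profile a smaller terminal radius does not by itself tame the negative bending contribution---what matters is choosing the profile's curvature $k$ small relative to $1/r$ at each radius $r$, so that the positive fiber term $(n-1)(n-2)\sin^2\theta/r^2$ dominates. This does not affect the correctness of your plan, since the standard GL estimate does furnish the needed bound once one starts from $\scal_{g_0}\ge 2S_0$, but the mechanism is not quite as you describe.
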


\begin{proof}
   We fix a metric $g'$ on the open $n$-ball $\Disk^n$ such that $(\Disk^n, g')$ contains an isometric copy of $[0,\infty)\times \S^{n-1}$ and $\scal_{g'}>0$.
   This is possible because $n\ge3$.
   Pick a point $p\in \Disk^n\setminus [0,l']\times \S^{n-1}$.
   Furthermore, fix a metric $g_0$ on $M$ with $\scal_{g_0} > 0$ and choose a point $q\in M$.

   We perform Gromov-Lawson surgery \cite{GL}*{Theorem~A} to obtain a Riemannian metric $g_1$ on $M\sharp \Disk^n=M\setminus \{\text{point}\}$ with $\scal_{g_1} > 0$ which differs from $g_0$ and $g'$ only in a small neighborhood of $q$ and $p$, respectively.
   In particular, $(M\setminus \{\text{point}\},g_1)$ still contains an isometric copy of $[0,\infty)\times \S^{n-1}$, see \cref{fig.5}.
\begin{figure}[h]
\centering
\begin{overpic}[width=0.6\textwidth]{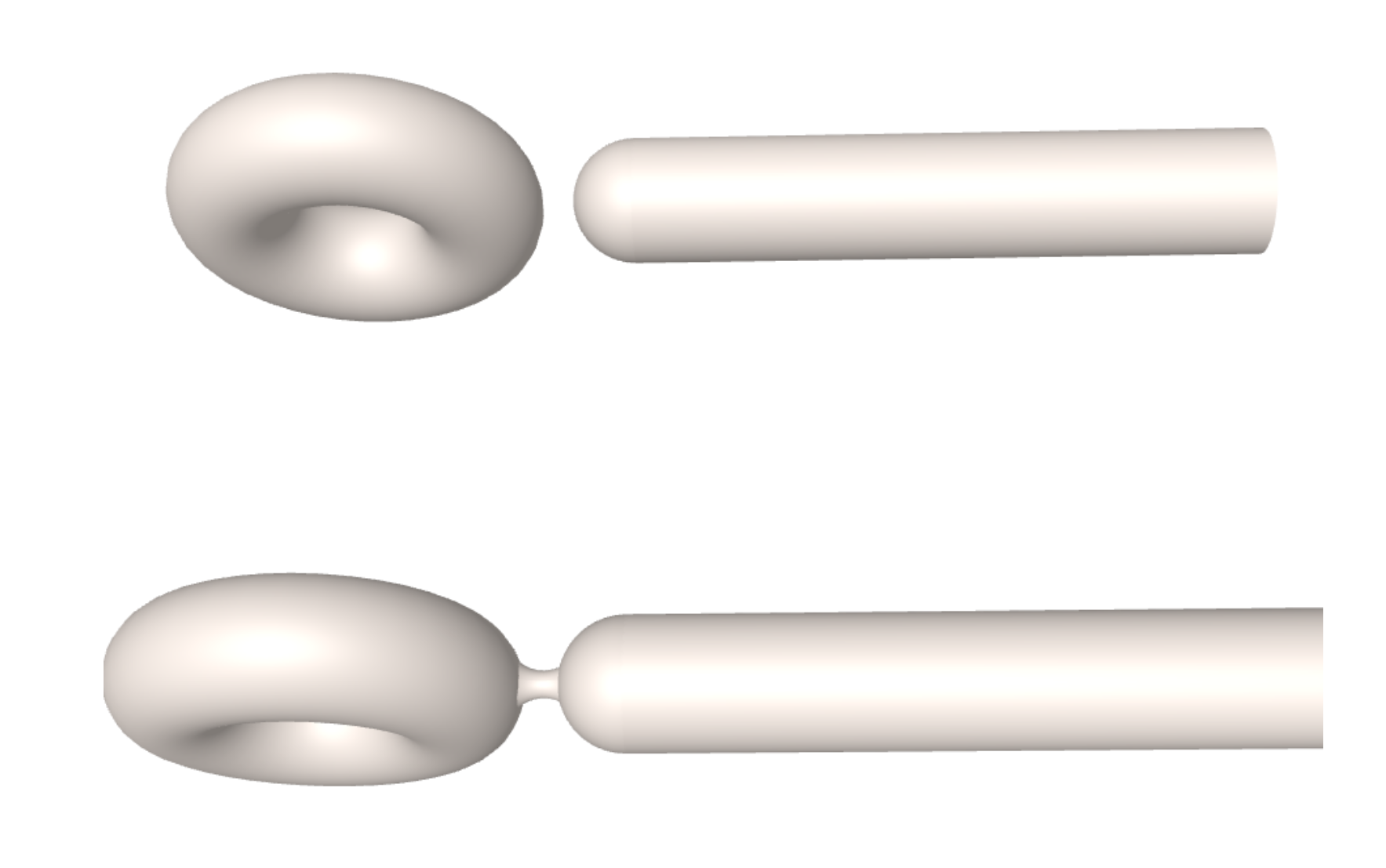}
\put(20,33){$(M,g_0)$}
\put(60,39){$(\Disk^n,g')$}
\put(25,0){$(M\setminus\{\text{point}\},g_1)$}
\end{overpic}
\caption{Construction of the metric $g_1$}
\label{fig.5}
\end{figure}

   For sufficiently small $\delta>0$, the metric $g_\delta=\delta^2 g_1$ has scalar curvature $\scal_{g_\delta} \geq S_0$ and $(M\setminus \{\text{point}\},g_\delta)$ contains an isometric copy of $[0,\infty)\times \S^{n-1}_\delta$.
   Given $l>0$ we can cut the cylinder $[0,\infty)\times \S^{n-1}_\delta$ at $l$.
\begin{figure}[h]
\centering
\begin{overpic}[width=0.6\textwidth]{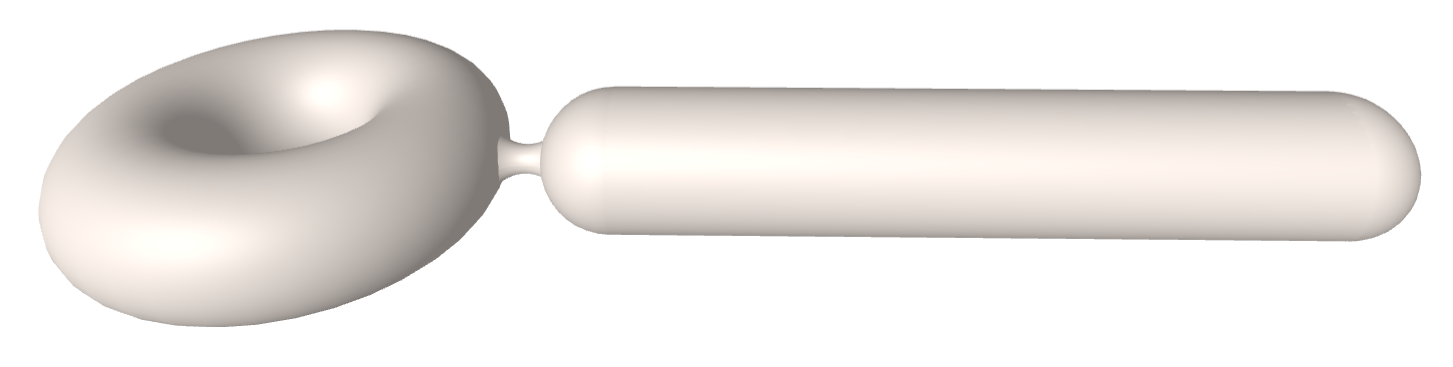}
\put(41,3){$|\!\raisebox{2.5pt}{\rule{20mm}{.6pt}}\,l\,\raisebox{2.5pt}{\rule{20mm}{.6pt}}\!|$}
\end{overpic}
\caption{Capping off the half-infinite cylinder}
\label{fig.6}
\end{figure}

The remaining cylinder $[0,l]\times \S^{n-1}_\delta$ can be capped off at $\{l\}\times \S^{n-1}_\delta$ to yield the desired metric $g_M$ on $M$, see \cref{fig.6}.
\end{proof}

\begin{lemma}\label{projection_of_the_wurst}
   Let \(M\) be a closed connected $n$-dimensional manifold which admits a Riemannian metric of positive scalar curvature with $n\ge3$.

   Then for each \(S_0 > 0\) there exists a constant \(\delta(M,S_0) > 0\), such that for each $\delta\in(0,\delta(M,S_0)]$ and each \(l > 0\) there exists a Riemannian metric \(g_M\) on \(M\) with \(\scal_{g_M} \geq S_0\) and a smooth surjective $1$-Lipschitz map \(f \colon M \to [0,l]\times\Disk^{n-1}_\delta\).
   Here \([0,l]\times\Disk^{n-1}_\delta\) carries the flat product metric.
\end{lemma}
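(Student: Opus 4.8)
The plan is to reduce to \cref{existence_of_the_wurst} and to write the required map down explicitly, built from the orthogonal projection of a round sphere onto a flat disk.

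First, I would apply \cref{existence_of_the_wurst} with cylinder length $l+c$ for a fixed constant $c>0$ (say $c=4$). This yields a metric $g_M$ on $M$ with $\scal_{g_M}\ge S_0$, an isometric embedding $[0,l+c]\times\Sphere^{n-1}_\delta\hookrightarrow M$, and a disconnected complement. Since $M$ is connected and the embedded cylinder is a connected codimension-zero submanifold with boundary $\{0\}\times\Sphere^{n-1}_\delta\sqcup\{l+c\}\times\Sphere^{n-1}_\delta$, the complement has exactly two components $C_-$ and $C_+$, with $\partial C_-=\{0\}\times\Sphere^{n-1}_\delta$ and $\partial C_+=\{l+c\}\times\Sphere^{n-1}_\delta$; thus $M=C_-\cup\bigl([0,l+c]\times\Sphere^{n-1}_\delta\bigr)\cup C_+$.

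Write $\Disk^{n-1}_\delta\subseteq\R^{n-1}$ for the closed flat $\delta$-disk and $\Sphere^{n-1}_\delta=\{x\in\R^n:\lvert x\rvert=\delta\}$, and let $\pi\colon\Sphere^{n-1}_\delta\to\Disk^{n-1}_\delta$ be the restriction of the orthogonal projection $\R^n\to\R^{n-1}$ onto the first $n-1$ coordinates. It is smooth, surjective, $1$-Lipschitz, and satisfies the sharper pointwise identity $\lvert d\pi_p(W)\rvert^2=\lvert W\rvert^2-W_n^2$ for $W\in T_p\Sphere^{n-1}_\delta$. I would then choose smooth functions $\lambda\colon[0,l+c]\to[0,l]$ and $\mu\colon[0,l+c]\to[0,1]$ with the following properties: $\lambda$ is onto and $0\le\lambda'\le1$; $\lambda\equiv0$ and $\mu\equiv0$ on a neighborhood of $0$; $\lambda\equiv l$ and $\mu\equiv0$ on a neighborhood of $l+c$; $\mu\equiv1$ on the stretch where $\lambda$ realizes all values of $[0,l]$; and $\mu'\equiv0$ wherever $\lambda'\ne0$ (so $\mu$ varies only on the two flat end-stretches of $\lambda$, which is possible because $c>0$). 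Now define $f\colon M\to[0,l]\times\Disk^{n-1}_\delta$ by $f\equiv(0,0)$ on $C_-$, $f\equiv(l,0)$ on $C_+$, and $f(\tau,p)=\bigl(\lambda(\tau),\mu(\tau)\pi(p)\bigr)$ on $[0,l+c]\times\Sphere^{n-1}_\delta$. Near $\tau=0$ we have $f\equiv(0,0)$ and near $\tau=l+c$ we have $f\equiv(l,0)$, so the cylinder formula matches the constant maps on $C_\pm$ and $f$ is globally smooth; $f$ lands in $[0,l]\times\Disk^{n-1}_\delta$ since $\lvert\mu(\tau)\pi(p)\rvert\le\mu(\tau)\,\delta\le\delta$; and $f$ is surjective because $\lambda$ and $\pi$ are.

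The only substantive point is that $f$ is $1$-Lipschitz. This is clear on $C_\pm$. On the cylinder, equipped with the product metric $d\tau^2+g_{\Sphere^{n-1}_\delta}$, one computes $\lvert df(a\partial_\tau+W)\rvert^2$ for a tangent vector $a\partial_\tau+W$ and, using $\lvert d\pi_p(W)\rvert^2=\lvert W\rvert^2-W_n^2$ and $\mu\le1$, finds that $\lvert a\partial_\tau+W\rvert^2-\lvert df(a\partial_\tau+W)\rvert^2$ dominates a quadratic form in the two scalars $(a,W_n)$ which is positive semidefinite as soon as $\lambda'(\tau)^2+\delta^2\mu'(\tau)^2\le1$. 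By the choice of $\lambda$ and $\mu$, this inequality reduces to $\lvert\lambda'\rvert\le1$ on the climbing stretch (true by construction) and to $\delta\lvert\mu'\rvert\le1$ on the two flat end-stretches, and the latter holds once $\delta$ is below a threshold depending only on the fixed profile chosen for $\mu$. Shrinking $\delta(M,S_0)$ accordingly, $f$ is $1$-Lipschitz, which completes the proof. I expect this estimate to be the main obstacle: a naive triangle-inequality bound degrades precisely where $\mu$ is close to $1$, so one must exploit the orthogonality recorded in the identity for $d\pi_p$ — equivalently, keep the off-diagonal term and the $W_n^2$ term of the quadratic form rather than estimating them away — and it is exactly here that the smallness of $\delta$ makes the "disk direction" of $f$ cheap.
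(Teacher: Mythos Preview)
Your proof is correct and follows essentially the same route as the paper: invoke \cref{existence_of_the_wurst} with an enlarged cylinder, compose the orthogonal projection $\pi\colon\Sphere^{n-1}_\delta\to\Disk^{n-1}_\delta$ with two cutoff functions (the paper's $\psi,\varphi$ are your $\lambda,\mu$), and send the two complementary pieces $C_\pm$ to the endpoints $(0,0)$ and $(l,0)$. Your explicit verification of the $1$-Lipschitz condition via the quadratic form in $(a,W_n)$ and the resulting sufficient condition $\lambda'^2+\delta^2\mu'^2\le 1$ is in fact more careful than the paper, which simply asserts that the map ``does the job''; your observation that one may need to shrink $\delta(M,S_0)$ to absorb $\lvert\mu'\rvert$ is the precise justification the paper leaves implicit.
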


\begin{proof}
According to \cref{existence_of_the_wurst}, given $S_0$, $\delta$ and $l$, we can find a Riemannian metric $g_M$ on $M$ with $\scal_{g_M} \geq S_0$ such that $M$ is isometric to 
\[
M_-\cup_{\{-3\}\times\S^{n-1}_\delta} \Big([-3,l+3]\times\S^{n-1}_\delta\Big)\cup_{\{l+3\}\times\S^{n-1}_\delta} M_+ \,,
\] 
where $M_+$ and $M_-$ are compact Riemannian manifolds with boundary isometric to $\S^{n-1}_\delta$.
The orthogonal projection $\R^n\to \{0\}\times\R^{n-1}=\R^{n-1}$ restricts to a smooth surjective $1$-Lipschitz map $\pi_\delta\colon\S^{n-1}_\delta\to\Disk^{n-1}_\delta$.
Choose a smooth function $\varphi\colon\R\to\R$ with the properties
\begin{enumerate}[label=$\triangleright$]
\item \(0\le\varphi\le 1\) everywhere,
\item \(\varphi = 0\) outside \([-3,l+3]\),
\item \(\varphi = 1\) on \([-1,l+1]\),
\item \(\lvert\dot{\varphi}\rvert < 1\) everywhere,
\end{enumerate}
and a smooth function \(\psi\colon\R\to\R\) satisfying
\begin{enumerate}[label=$\triangleright$]
\item \(0\le\psi\le l\) everywhere,
\item \(\psi = 0\) on \((-\infty,-1]\),
\item \(\psi = l\) on \([l+1,\infty)\),
\item \(\lvert\dot{\psi}\rvert < 1\) everywhere.
\end{enumerate}
Then the map
\begin{gather*}
f(x)=
\begin{cases}
(\psi(t),\varphi(t)\pi_\delta(y)), &\text{if } x=(t,y)\in [-3,l+3]\times\S^{n-1}_\delta, \\
(0,0), &\text{if } x\in M_-, \\
(l,0), &\text{if } x\in M_+,
\end{cases}
\end{gather*}
does the job.
\end{proof}

\begin{proof}[Proof of \cref{thmB}]
If suffices to consider the case $\eps=2$ because for arbitrary $\eps>0$ one can then apply the $(\eps=2)$-result with scalar curvature bound $S_0\cdot(2\eps)^{-2}$ and then rescale the metric on $M$.
In view of \cref{projection_of_the_wurst}, we need to construct a smooth surjective $2$-Lipschitz map $[0,l]\times \Disk^{n-1}_\delta\to N$ for some $\delta>0$ and $l>0$.

Let $\delta(M,S_0)$ be as in \cref{projection_of_the_wurst}.
Choose $\delta\in(0,\delta(M,S_0)]$ so small that $\lvert \dd_v\exp_y\rvert < 2$ whenever $v\in T_yN$ with $\lvert v\rvert\le\delta$.
We choose a smooth curve $\gamma\colon [0,l_1]\to N$ parametrized by arc-length which is $\delta$-dense in $N$, meaning that the $\delta$-tube about the trace of $\gamma$ covers all of $N$, i.e., $\Ball_\delta(\gamma([0,l_1]))=N$.

We choose an orthonormal frame $e_1,\dots,e_{n-1}$ of $\dot{\gamma}(0)^\perp\subseteq T_{\gamma(0)}N$ and parallel translate it along $\gamma$ w.r.t.\ the normal connection.
We obtain an orthonormal frame $e_1(t),\dots,e_{n-1}(t)$ of $\dot{\gamma}(t)^\perp \subseteq T_{\gamma(t)N}$ for all $t\in[0,l_1]$.
We fix a constant $\Lambda > 0$ to be chosen later and consider the map
\[
h\colon [0,\Lambda l_1]\times \Disk_\delta^{n-1}\to N, 
\quad 
(t,x)\mapsto \exp_{\gamma(t/\Lambda)}\bigg(\sum_{i=1}^{n-1} x_i e_i(t/\Lambda)\bigg).
\]
The map $h$ is smooth and surjective.
The derivatives in the $x_i$-directions are bounded by $2$ because of the choice of $\delta$.
The same holds for the derivative in the $t$-direction if we choose $\Lambda$ large enough.
Thus $h$ is $2$-Lipschitz as desired.
\end{proof}


\begin{bibdiv}
\begin{biblist}

\bib{B}{article}{
 author={Bär, Christian},
 title={Dirac eigenvalues and the hyperspherical radius},
 year={2026},
 journal={Journal of the European Mathematical Society},
 note={published online first},
}

\bib{BBHW}{article}{
 author={Bär, Christian},
 author={Brendle, Simon},
 author={Hanke, Bernhard},
 author={Wang, Yipeng},
 issn={1815-0659},
 doi={10.3842/SIGMA.2024.035},
 title={Scalar curvature rigidity of warped product metrics},
 journal={SIGMA. Symmetry, Integrability and Geometry: Methods and Applications},
 volume={20},
 pages={paper 035, 26},
 date={2024},
 publisher={National Academy of Sciences of Ukraine - NAS Ukraine (Natsional'na Akademiya Nauk Ukra{\"{\i}}ny - NAN Ukra{\"{\i}}ny), Institute of Mathematics (Instytut Matematyky), Kiev},
}

\bib{burago-ivanov}{article}{
 author={Burago, Dmitri},
 author={Ivanov, Sergei},
 issn={0003-486X},
 issn={1939-8980},
 doi={10.4007/annals.2010.171.1183},
 review={Zbl 1192.53048},
 title={Boundary rigidity and filling volume minimality of metrics close to a flat one},
 journal={Annals of Mathematics. Second Series},
 volume={171},
 number={2},
 pages={1183--1211},
 date={2010},
 publisher={Princeton University, Mathematics Department, Princeton, NJ},
 eprint={annals.princeton.edu/annals/2010/171-2/p10.xhtml},
}

\bib{CHS}{article}{
      title={Lipschitz rigidity for scalar curvature}, 
      author={Cecchini, Simone},
      author={Hanke, Bernhard},
      author={Schick, Thomas},
      year={2024},
      doi={10.4171/jems/1514}, 
      journal={Journal of the European Mathematical Society},
      note={published online first},
}

\bib{CHSS}{arxiv}{
 author={Cecchini, Simone},
 author={Hanke, Bernhard},
 author={Schick, Thomas},
 author={Schönlinner, Lukas},
 title={Abstract cone operators and Lipschitz rigidity for scalar curvature on singular manifolds},
 year={2025},
 url={\url{https://doi.org/10.48550/arXiv.2505.14054}}, 
}

\bib{CZ}{article}{
 author={Cecchini, Simone},
 author={Zeidler, Rudolf},
 issn={1465-3060},
 issn={1364-0380},
 doi={10.2140/gt.2024.28.1167},
 title={Scalar and mean curvature comparison via the Dirac operator},
 journal={Geometry \& Topology},
 volume={28},
 number={3},
 pages={1167--1212},
 date={2024},
 publisher={Mathematical Sciences Publishers (MSP), Berkeley, CA; Geometry \& Topology Publications c/o University of Warwick, Mathematics Institute, Coventry},
}

\bib{Gallot-Hulin-Lafontaine}{book}{
 author={Gallot, Sylvestre},
 author={Hulin, Dominique},
 author={Lafontaine, Jacques},
 isbn={3-540-20493-8},
 issn={0172-5939},
 issn={2191-6675},
 book={
 title={Riemannian geometry},
 publisher={Berlin: Springer},
 },
 title={Riemannian geometry},
 edition={3rd ed.},
 series={Universitext},
 pages={xv + 322},
 date={2004},
 publisher={Springer, Cham},
}

\bib{GS}{article}{
 author={Goette, Sebastian},
 author={Semmelmann, Uwe},
 issn={0926-2245},
 doi={10.1016/S0926-2245(01)00068-7},
 title={Scalar curvature estimates for compact symmetric spaces.},
 journal={Differential Geometry and its Applications},
 volume={16},
 number={1},
 pages={65--78},
 date={2002},
 publisher={Elsevier (North-Holland), Amsterdam},
}


\bib{GL}{article}{
 author={Gromov, Mikhael},
 author={Lawson, H. Blaine},
 issn={0003-486X},
 issn={1939-8980},
 doi={10.2307/1971103},
 title={The classification of simply connected manifolds of positive scalar curvature},
 journal={Annals of Mathematics. Second Series},
 volume={111},
 pages={423--434},
 date={1980},
 publisher={Princeton University, Mathematics Department, Princeton, NJ},
 eprint={semanticscholar.org/paper/2441b94f710dea851a1c6360258bba59be032c8a},
}

\bib{HKKZ}{article}{
 author={Hirsch, Sven},
 author={Kazaras, Demetre},
 author={Khuri, Marcus},
 author={Zhang, Yiyue},
 issn={0025-5831},
 issn={1432-1807},
 doi={10.1007/s00208-024-02973-y},
 title={Rigid comparison geometry for Riemannian bands and open incomplete manifolds},
 journal={Mathematische Annalen},
 volume={391},
 number={2},
 pages={2587--2652},
 date={2025},
 publisher={Springer, Berlin/Heidelberg},
}

\bib{HLS}{article}{
 author={Hu, Yuhao},
 author={Liu, Peng},
 author={Shi, Yuguang},
 issn={1945-5844},
 issn={0030-8730},
 doi={10.2140/pjm.2023.323.89},
 title={Rigidity of 3D spherical caps via {{\(\mu \)}}-bubbles},
 journal={Pacific Journal of Mathematics},
 volume={323},
 number={1},
 pages={89--114},
 date={2023},
 publisher={Mathematical Sciences Publishers (MSP), Berkeley, CA; Pacific Journal of Mathematics c/o University of California, Berkeley, CA},
}

\bib{LT}{arxiv}{
   title={Rigidity of Lipschitz map using harmonic map heat flow}, 
   author={Lee, Man-Chun},
   author={Tam, Luen-Fai},
   year={2022},
   url={\url{https://doi.org/10.48550/arXiv.2207.11017}}, 
}

\bib{Li}{arxiv}{
   title={Scalar curvature on compact symmetric spaces}, 
   author={Listing, Mario},
   year={2010},
   url={\url{https://doi.org/10.48550/arXiv.1007.1832}}, 
}

\bib{Ll}{article}{
   author={Llarull, Marcelo},
   title={Sharp estimates and the Dirac operator},
   journal={Mathematische Annalen},
   volume={310},
   date={1998},
   number={1},
   pages={55--71},
   issn={0025-5831},
   doi={10.1007/s002080050136},
}

\bib{Lo}{article}{
 author={Lott, John},
 issn={0002-9939},
 issn={1088-6826},
 doi={10.1090/proc/15551},
 title={Index theory for scalar curvature on manifolds with boundary},
 journal={Proceedings of the American Mathematical Society},
 volume={149},
 number={10},
 pages={4451--4459},
 date={2021},
 publisher={American Mathematical Society (AMS), Providence, RI},
}

\bib{MS}{article}{
 author={Myers, Sumner Byron},
 author={Steenrod, Norman Earl},
 issn={0003-486X},
 issn={1939-8980},
 doi={10.2307/1968928},
 title={The group of isometries of a Riemannian manifold.},
 journal={Annals of Mathematics. Second Series},
 volume={40},
 pages={400--416},
 date={1939},
 publisher={Princeton University, Mathematics Department, Princeton, NJ},
}
\end{biblist}
\end{bibdiv}

\end{document}